 \newtheorem{cor}[theorem]{Corollary}
 {\rm}
 \newtheorem{rem}[theorem]{Remark}
 \newtheorem{ex}{Example}
\def\x{\mathbf{x}}
\def\B{\mathbf{B}}
\def\R{\mathbb{R}}
\def\N{\mathbb{N}}
\def\D{\mathbf{D}}
\def\U{\mathbf{U}}
\def\K{\mathbf{K}}
\def\M{\mathbf{M}}
\def\P{\mathbf{P}}
\def\Z{\mathbf{Z}}
\def\H{\mathbf{H}}
\def\A{\mathbf{A}}
\def\bF{\mathbf{F}}
\def\B{\mathbf{B}}
\def\D{\mathbf{D}}
\def\bS{\mathbf{S}}
\def\bR{\mathbf{R}}
\def\Y{\mathbf{Y}}
\def\f{\mathbf{f}}
\def\z{\mathbf{z}}
\def\y{\mathbf{y}}
\def\y{\mathbf{y}}
\def\w{\mathbf{w}}
\def\u{\mathbf{u}}
\def\p{\mathbf{p}}
\def\s{\mathcal{S}}
\def\vol{{\rm vol}\,}
\def\dis{\displaystyle}
\begin{document}

\title{Tractable approximations of sets defined with quantifiers\thanks{The author wishes to acknowledge
financial support from  the Isaac Newton Institute for the Mathematical Sciences (Cambridge, UK) 
where this work was completed while the author was Visiting Fellow in July 2013. This work was also supported from 
a grant from the (french) Gaspar Monge Program for Optimization and Operations Research (PGMO). }}


\titlerunning{Approximations of sets defined with quantifiers}        

\author{Jean B. Lasserre}

\institute{Jean B. Lasserre \at
              LAAS-CNRS and Institute of Mathematics,
             University of Toulouse. LAAS, 7 Avenue du Colonel Roche, BP 54200, 31031 Toulouse Cedex 4, France.\\
              Tel.: +33561336415\\
              Fax: +33561336936\\
              \email{lasserre@laas.fr}}

\date{Received: date / Accepted: date}

\maketitle

\begin{abstract}
Given a compact basic semi-algebraic set $\K\subset\R^n\times\R^m$, 
a simple set $\B$ (box 
or ellipsoid), and some semi-algebraic function $f$, we consider sets defined with quantifiers, of the form
\begin{eqnarray*}
\bR_f:=\{\x\in\B&:& \mbox{$f(\x,\y)\leq 0$ for all $\y$ such that $(\x,\y)\in\K$}\}\\
\D_f:=\{\x\in\B&:& \mbox{$f(\x,\y)\geq 0$ for some $\y$ such that $(\x,\y)\in\K$}\}.\end{eqnarray*}
The former set $\bR_f$ is particularly useful to qualify
``robust" decisions $\x$ versus noise parameter $\y$ (e.g. in robust optimization on some set $\mathbf{\Omega}\subset\B$) whereas
the latter set $\D_f$ is useful (e.g. in optimization) when one does not want to work with 
its lifted representation $\{(\x,\y)\in\K: f(\x,\y)\geq 0\}$. Assuming that
$\K_\x:=\{\y:(\x,\y)\in\K\}\neq\emptyset$ for every $\x\in\B$, we provide a 
systematic procedure to obtain a sequence of explicit inner (resp. outer) approximations
that converge to $\bR_f$ (resp. $\D_f$) in a strong sense. 
Another (and remarkable) feature is that each approximation
is the sublevel set of a single polynomial whose vector of coefficients is an optimal solution of a semidefinite program.
Several extensions are also proposed, and in particular, approximations for sets of the form
\[\bR_F:=\{\x\in\B\::\: \mbox{$(\x,\y)\in F$ for all $\y$ such that $(\x,\y)\in\K$}\}\]
where $F$ is some other basic-semi algebraic set, and also sets defined with two quantifiers.
\keywords{Sets with quantifiers \and robust optimization \and semi-algebraic set \and semi-algebraic optimization\and semidefinite programming}
 \subclass{90C22}
\end{abstract}

\section{Introduction}

Consider two sets of variables $\x\in\R^n$ and $\y\in\R^m$ 
coupled with a constraint $(\x,\y)\in\K$, where $\K\subset\R^n\times\R^m$ is some compact basic semi-algebraic set\footnote{A basic semi-algebraic set is the intersection $\cap_{j=1}^m\{\x:g_j(\x)\geq0\}$ of super level sets 
of finitely many polynomials $(g_j)\subset\R[\x]$.} 
defined by:
\begin{equation}
\label{basic-K}
\K\,:=\,\{(\x,\y)\in\R^n\times\R^m\::\:\x\in\B;\quad g_j(\x,\y)\,\geq\,0,\:j=1,\ldots,s\}
\end{equation}
for some polynomials $g_j$, $j=1,\ldots,s$, and 
let $\B\subset\R^n$ be a simple set (e.g. some box or ellipsoid).

With $f:\K\to\R$ a given semi-algebraic function on $\K$ (that is, a function whose graph $\Psi_f:=\{(\x,f(\x)):\x\in\K\}$ is a semi-algebraic set),
and 
\begin{equation}
\label{setKx}
\K_\x\,:=\,\{\y\,\in\,\R^m\::\:(\x,\y)\,\in\,\K\:\},
\end{equation}
consider the two sets:

\begin{equation}
\label{robust-set}
\bR_f\,:=\,\{\:\x\in\B\::\:f(\x,\y)\leq 0 \mbox{ for all $\y\in\K_\x$}\:\},
\end{equation}
and
\begin{equation}
\label{design-set}
\D_f\,:=\,\{\:\x\in\B\::\:f(\x,\y)\geq 0 \mbox{ for some $\y\in\K_\x$}\:\}.
\end{equation}
Both sets $\bR_f$ and $\D_f$ which include a {\it quantifier} in their definition, are semi-algebraic and are interpreted as {\it robust} sets of variables $\x$
with respect to the other set of variables $\y$, and to some performance criterion $f$. \\

Indeed in the first case (\ref{robust-set}) one may think of ``$\x$" as {\it decision} variables which should be {\it robust}
with respect to some {\it noise} (or perturbation) $\y$ in the sense that
no matter what the admissible level of noise $\y\in\K_\x$ is, the constraint 
$f(\x,\y)\leq 0$ is satisfied whenever $\x\in\bR_f$. For instance, such sets $\bR_f$ are fundamental in robust control
and robust optimization on a set $\mathbf{\Omega}\subset\B$ (in which case one 
is interested in $\bR_f\cap\mathbf{\Omega}$).
Instead of considering $\mathbf{\Omega}$ directly in the definition (\ref{robust-set})
of $\bR_f$ one introduces the simple set 
$\B\supset\mathbf{\Omega}$ because {\it moments} 
of the Lebesgue measure on $\B$ (which are needed later) are easy to compute 
(in contrast to moments of the Lebesgue measure on $\mathbf{\Omega}$). For a nice treatment
of robust optimization the interested reader is referred to Ben-Tal et al. \cite{bental} where a particular 
emphasis is put on how to model the uncertainty so as to obtain {\it tractable} formulations 
for robust counterparts of some (convex) conic optimization problems. We are here interested in (converging) approximations
of sets $\bR_f$ in the general framework of polynomials.

On the other hand, in the second case (\ref{design-set})
the vector $\x$ should be interpreted as {\it design} variables (or parameters) and
the set $\K_\x$ defines a set of admissible decisions $\y\in\K_\x$
within the framework of design $\x$.
And so $\D_f$ is the set of {\it robust} design parameters $\x$, in the sense that
for every value of the design parameter $\x\in\D_f$, there is at least one admissible decision $\y\in\K_\x$
with performance level $f(\x,\y)\geq 0$. Notice that
$\D_{f}\supseteq\overline{\B\setminus\bR_f}$, and in a sense robust optimization on $\bR_f$
is dual to optimization on $\D_f$.\\

The semi-algebraic function $f$ as well as the set $\K$ can be fairly complicated and
therefore in general both sets $\bR_f$ and $\D_f$ are non convex so that their exact description
can be fairly complicated as well!
Needless to say that robust optimization problems 
with constraints of the form $\x\in\bR_f$, are very difficult solve. 
In principle when $\K$ is a basic semi-algebraic set,
quantifier elimination is possible via algebraic techniques; see e.g. Bochnak et al. \cite{bochnak}.
However, in practice quantifier elimination is very costly and intractable.

On the other hand, optimization problems with a constraint of the form $\x\in\D_f$
(or $\x\in\D_f\cap\mathbf{\Omega}$ for some $\mathbf{\Omega}$)
can be formulated directly in the lifted space 
of variables $(\x,\y)\in\R^n\times\R^m$ (i.e. by adding the constraints $f(\x,\y)\geq0;\:(\x,\y)\in\K$)
and so with no approximation. But sometimes one may be interested in
getting a description of the set $\D_f$ itself in $\R^n$ because its ``shape" is
hidden in the lifted $(\x,\y)$-description, or because optimizing over $\K\cap \{(\x,\y):f(\x,\y)\geq0\}$
may not be practical. However, if the projection 
of a basic semi-algebraic set (like e.g. $\D_f$) is semi-algebraic, it is not necessarily {\it basic} semi-algebraic and could be 
a complicated union of several basic semi-algebraic sets (hence not very useful in practice). 

So a less ambitious but more practical goal is
to obtain {\it tractable} approximations of such sets $\bR_f$ (or $\D_f$).
Then such approximations can be used for various purposes, optimization being only one potential
application.

\paragraph{Contribution.}
In this paper we provide a hierarchy $(\bR^k_f)$ (resp. $(\D_f^k)$), $k\in\N$, of {\it inner} approximations  
for $\bR_f$ (resp. {\it outer} approximations for $\D_f$). These two hierarchies have three essential characteristic features:

- (a) Each set $\bR_f^k\subset\R^n$ (resp. $\D_f^k$), $k\in\N$, has a very simple description
in terms of the sublevel set $\{\,\x\in\B: p_k(\x)\leq0\,\}$ (resp. $\{\,\x\in\B: p_k(\x)\geq0\,\}$)
associated with a {\it single} polynomial $p_k$.

- (b) Both hierarchies $(\bR_f^k)$ and $(\D_f^k)$, $k\in\N$, converge in a strong sense since we prove
that  (under some conditions) ${\rm vol}\,(\bR_f\setminus \bR^k_f)\to 0$  (resp.  ${\rm vol}\,(\D_f^k\setminus \D_f)\to 0$)
as $k\to\infty$ (and where ``${\rm vol}(\cdot)$" denotes the Lebesgue volume). In other words, 
for $k$ sufficiently large the inner approximations $\bR_f^k$ (resp. outer approximations $\D_f^k$) coincide with $\bR_f$ (resp. $\D_f$) up to a set 
of very small Lebesgue volume.

- (c) Computing the vector of coefficients of the above
polynomial $p_k$ reduces to solving a semidefinite program whose size is parametrized by $k$.

As one potential application, the constraint $p_k(\x)\leq0$ (resp. $p_k(\x)\geq0$) 
can be used in any robust (resp. design) optimization problem on $\mathbf{\Omega}\,\subseteq\B$ 
as a substitute for $\x\in\bR_f\cap\,\mathbf{\Omega}$ (resp. $\x\in\D_f\cap\,\mathbf{\Omega}$), thereby eliminating 
the variables $\y$. 
One then obtains a standard polynomial
minimization problem $\P$ for which one may apply the hierarchy of semidefinite relaxations defined in \cite{lass-book}
to obtain a sequence of lower bounds on the optimal value (and sometimes an optimal solution if the size of the resulting is moderate or
if some sparsity pattern can be used for larger size problems). For more details,
the interested reader is referred to \cite{lass-book} (and Waki et al \cite{waki} for semidefinite relaxations 
that use a sparsity pattern). This approach was proposed in \cite{lassjogo} for robust optimization 
(and in \cite{anjos-j+m} with some convergence guarantee). 
But the sets $\bR_f^k$ can also be used in other applications to provide a certificate for robustness
as membership in $\bR_f^k$ is easy to check and the approximation is from inside.

We first obtain inner (resp. outer) approximations of $\bR_f$ (resp. $\D_f$) when $f$ is a {\it polynomial}. 
To do so we extensively use a previous result of the author \cite{lass-param} 
which allows to approximate in a strong sense the optimal value of a parametric optimization problem.
We then describe how the methodology can be extended to the case where $f$ is a semi-algebraic function on $\K$, whose graph $\Psi_f$
is explicitly described by a basic semi-algebraic set\footnote{That is,
$\Psi_f=\{(\x,\y,f(\x,\y)):(\x,\y)\,\in\,\K\}=\{(\x,\y,v):(\x,\y)\in\K;\:\exists\w\mbox{ s.t. }h_\ell(\x,\y,\w,v)\geq0,\,\ell=1,\ldots,s\}$,
for some polynomials $h_\ell\in\R[\x,\y,\w,v]$.}.
This methodology had been already used in Henrion and Lasserre \cite{lass-ieee} to provide (convergent) inner approximations 
for the particular case of a set defined by matrix polynomial inequalities. 
The present contribution can be viewed as an extension
of \cite{lass-ieee} to the more general framework (\ref{robust-set})-(\ref{design-set}) and with $f$ semi-algebraic.

Finally, we also provide several extensions, and in particular, we consider:

- The case where one also enforces the computed inner or outer approximations to be
a {\it convex} set. This can be interesting for optimization purposes but of course, in this case convergence as in (b) is lost.


- The case where $f(\x,\y)\leq0$ is now replaced with a polynomial matrix inequality $\bF(\x,\y)\preceq0$, i.e.,
$\bF(\cdot,\cdot)$ is a real symmetric $m\times m$ matrix such that $\bF_{ij}\in\R[\x,\y]$ for each entry $(i,j)$.
One then retrieves the methodology already used in Henrion and Lasserre \cite{lass-ieee} to provide (convergent) inner approximations 
of the set $\{\x\in\B: \bF(\x)\succeq0\}$ for some polynomial matrix inequality.

- The case where $\bR_f$ is now replaced with the set $\bR_F$ defined by:
\[\bR_F\,=\,\{\x\in\B\::\: \mbox{$(\x,\y)\in F$ for all $\y$ such that $(\x,\y)\in\K$}\},\]
where $F$ is some basic-semi-algebraic set. And a similarly extension is also
possible for sets $\D_f$ defined accordingly.

- The case where we now how have two quantifiers, like for instance,
\[\bR_f\,=\,\{\x\in\B_\x\::\:\exists\,\y\in\B_\y\mbox{ s.t. }f(\x,\y,\u)\leq0,\:\forall \u:\:(\x,\y,\u)\in\K\},\]
for some boxes $\B_\x\subset\R^n$, $\B_\y\subset\R^m$, and some compact set $\K\subset\R^n\times\R^m\times\R^s$.

- The case where $\K$ is a semi-algebraic (but not basic semi-algebraic) set.

\section{Notation and definitions}
\label{notation}

In this paper we  use some material which is now more or less standard in what is called
{\it Polynomial Optimization}. The reader not totally familiar with such notions and 
concepts, will find suitable additional material and details in the sources \cite{sos}, \cite{lass-book} and \cite{Laurent}.

Let $\R[\x]$ denote the ring or real polynomials in the variables $\x=(x_1,\ldots,x_n)$, and 
let $\R[\x]_d$ be the vector space of real polynomials of degree at most $d$.
Similarly, let $\Sigma[\x]\subset\R[\x]$ denote the convex cone of real polynomials that are sums of squares (SOS) of polynomials,
and $\Sigma[\x]_d\subset\Sigma[\x]$ its subcone of SOS polynomials of degree at most $2d$.
Denote by $\s^m$ the space of $m\times m$ real symmetric matrices. For a given matrix $\A\in\s^m$, the notation
$\A \succeq 0$ (resp. $\A\succ0$) means that $\A$ is positive semidefinite (resp. positive definite), i.e., all its eigenvalues 
are real and nonnegative (resp. positive).  For a Borel set $B\subset\R^n$ let
${\rm vol}(B)$ denote its Lebesgue volume.

\paragraph{Moment matrix.} With $\z=(z_\alpha)$ being a sequence indexed in the canonical basis
$(\x^\alpha)$ of $\R[\x]$, let $L_\z:\R[\x]\to\R$ be the so-called Riesz functional defined by:
\[f\quad (=\sum_{\alpha}f_{\alpha}\,\x^\alpha)\quad\mapsto\quad
L_\z(f)\,=\,\sum_{\alpha}f_{\alpha}\,z_{\alpha},\]
and let $\M_d(\z)$ be the symmetric matrix with rows and columns indexed in 
the canonical basis $(\x^\alpha)$, and defined by:
\begin{equation}
\label{moment}\M_d(\z)(\alpha,\beta)\,:=\,L_\z(\x^{\alpha+\beta})\,=\,z_{\alpha+\beta},\quad\alpha,\beta\in\N^n_d\end{equation}
with $\N^n_d:=\{\alpha\in\N^n\::\:\vert \alpha\vert \:(=\sum_i\alpha_i)\leq d\}$.

If $\z$ has a representing measure $\mu$, i.e., if
$z_\alpha=\int \x^\alpha d\mu$ for every $\alpha\in\N^n$, then
\[\langle \f,\M_d(\z)\f\rangle\,=\,\int f(\x)^2\,d\mu(\x)\,\geq\,0,\qquad \forall \,f\in\R[\x]_d,\]
and so $\M_d(\z)\succeq0$. In particular, if $\mu$ has a density $h$ with respect
to the Lebesgue measure, positive on some open set $B$, then $\M_d(\z)\succ0$ because 
\[0\,=\,\langle \f,\M_d(\z)\f\rangle\,\geq\,\int_B f(\x)^2\,h(\x)d\x\quad\Rightarrow\quad f=0.\]

\paragraph{Localizing matrix.} Similarly, with $\z=(z_{\alpha})$
and $g\in\R[\x]$ written
\[\x\mapsto g(\x)\,=\,\sum_{\gamma\in\N^n}g_{\gamma}\,\x^\gamma,\]
let $\M_d(g\,\y)$ be the symmetric matrix with rows and columns indexed in 
the canonical basis $(\x^\alpha)$, and defined by:
\begin{equation}
\label{localizing}
\M_d(g\,\z)(\alpha,\beta)\,:=\,L_\z\left(g(\x)\,\x^{\alpha+\beta}\right)\,=\,\sum_{\gamma}g_{\gamma}\,
z_{\alpha+\beta+\gamma},\qquad\forall\,
\alpha,\beta\in\N^n_d.\end{equation}
If $\z$ has a representing measure $\mu$,
then $\langle \f,\M_d(g\,\z)\f\rangle=\int f^2gd\mu$,
and so if $\mu$ is supported on the set $\{\x\,:\,g(\x)\geq0\}$,
then $\M_d(g\,\z)\succeq0$ for all $d=0,1,\ldots$ because
\begin{equation}
\label{localizing2}
\langle \f,\M_d(g\,\z)\f\rangle\,=\,\int f(\x)^2g(\x)\,d\mu(\x)\,\geq\,0,\qquad \forall \,f\in\R[\x]_d.\end{equation}
In particular, if $\mu$ is the Lebesgue measure and $g$ is positive on some open set $B$, then $\M_d(g\,\z)\succ0$ because 
\[0\,=\,\langle \f,\M_d(g\,\z)\f\rangle\,\geq\,\int_B f(\x)^2\,g(\x)d\x\quad\Rightarrow\quad f=0.\]

\section{Main result}
\label{main}

Let $\K$ be the basic semi-algebraic set defined in (\ref{basic-K}) 
for some polynomials $g_j\subset\R[\x,\y]$, $j=1,\ldots,s$, and with 
simple set (box or ellipsoid) $\B\subset\R^n$.

Denote by $L_1(\B)$ the Lebesgue space of measurable functions $h:\B\to\R$ that are integrable with respect to the Lebesgue measure on $\B$, i.e., such that $\int_\B\vert h\vert d\x<\infty$.

Given $f\in\R[\x,\y]$, consider the mapping $\overline{J_f}:\B\to\R\cup\{-\infty\}$ defined by:
\begin{equation}
\label{upper}
\x\mapsto \overline{J_f}(\x)\,:=\,\dis\max_{\y}\,\{\,f(\x,\y):\y\in\,\K_\x\,\},\qquad\x\,\in\,\B.\end{equation}
Therefore the set $\bR_f$ in (\ref{robust-set}) reads $\{\,\x\in\B: \overline{J_f}(\x)\leq0\,\}$ whereas
$\D_f$ in (\ref{design-set}) reads $\{\x\in\B: \overline{J_f}(\x)\geq0\,\}$.
\begin{lemma}
The function $\overline{J_f}$ is upper semi-continuous.
\end{lemma}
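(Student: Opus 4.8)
The plan is to reduce the claim to a routine compactness argument, essentially the upper semi-continuity half of Berge's maximum theorem. First note that $\overline{J_f}$ is in fact real-valued on $\B$: by the standing assumption $\K_\x\neq\emptyset$ for every $\x\in\B$, and $\K_\x$ is compact (a closed subset of the compact set $\K$), so the maximum in \eqref{upper} is attained, hence finite. To prove upper semi-continuity I would use the sequential criterion: fix an arbitrary $\x_0\in\B$ and an arbitrary sequence $(\x_k)_{k\in\N}\subset\B$ with $\x_k\to\x_0$, and show that $\limsup_{k\to\infty}\overline{J_f}(\x_k)\le\overline{J_f}(\x_0)$.

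Carrying this out: passing to a subsequence (not relabelled) I may assume $\overline{J_f}(\x_k)\to L:=\limsup_k\overline{J_f}(\x_k)$. For each $k$ choose a maximizer $\y_k\in\K_{\x_k}$ with $f(\x_k,\y_k)=\overline{J_f}(\x_k)$; then $(\x_k,\y_k)\in\K$. Since $\K$ is compact, a further subsequence has $\y_k\to\y^\star$ for some $\y^\star\in\R^m$, and since $\K$ is closed and $\x_k\to\x_0$ the limit satisfies $(\x_0,\y^\star)\in\K$, i.e. $\y^\star\in\K_{\x_0}$. Because $f$ is a polynomial, hence continuous, $\overline{J_f}(\x_k)=f(\x_k,\y_k)\to f(\x_0,\y^\star)$ along this subsequence, so $L=f(\x_0,\y^\star)\le\max_{\y\in\K_{\x_0}}f(\x_0,\y)=\overline{J_f}(\x_0)$. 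As $\x_0$ and the sequence were arbitrary, $\overline{J_f}$ is upper semi-continuous on $\B$.

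Structurally, what makes this work is that the set-valued map $\x\mapsto\K_\x$ is compact-valued, has closed graph (because $\K$ is closed), and is uniformly bounded (because $\K$ is bounded), so it is upper hemicontinuous; the continuity of $f$ then yields the conclusion. The only two points requiring a little care are (i) that the maximum is attained, so $\overline{J_f}$ is finite and the supremum can be represented by an honest maximizer $\y_k$, and (ii) the two nested extractions of subsequences — first to realize the $\limsup$, then to obtain a convergent sequence of maximizers. I do not expect any genuine obstacle here: the statement is a soft compactness fact and the argument above is complete modulo these standard steps. (If one wishes to allow $\K_\x=\emptyset$ for some $\x$, one keeps the convention $\overline{J_f}(\x)=-\infty$ there and the same argument still gives upper semi-continuity, but under the paper's standing hypothesis this case does not arise.)
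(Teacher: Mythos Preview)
Your proof is correct and follows essentially the same route as the paper's: choose maximizers $\y_k\in\K_{\x_k}$, use compactness of $\K$ to extract a convergent subsequence $(\x_k,\y_k)\to(\x_0,\y^\star)\in\K$, and invoke continuity of $f$ to conclude $\limsup_k\overline{J_f}(\x_k)=f(\x_0,\y^\star)\le\overline{J_f}(\x_0)$. The only cosmetic difference is that the paper begins by selecting a sequence realizing $\limsup_{\z\to\x}\overline{J_f}(\z)$ directly, whereas you start from an arbitrary convergent sequence and extract a subsequence realizing its $\limsup$; both are standard formulations of the sequential criterion and lead to the same two-step compactness extraction.
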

\begin{proof}
With $\x\in\B$ let $(\x_n)\subset\B$, $n\in\N$, be a sequence such that $\x_n\to\x$ and
$\limsup_{\z\to\x}\overline{J_f}(\z)=\lim_{n\to\infty}\overline{J_f}(\x_n)$. As $\K$ is compact, for every $n\in\N$,
$\overline{J_f}(\x_n)=f(\x_n,\y_n)$ for some $\y_n\in\K_{\x_n}$. Therefore there is some subsequence
$(n_k)$, $k\in\N$, and some $\y$ with $(\x,\y)\in\K$, such that$(\x_{n_k},\y_{n_k})\to(\x,\y)\in\K$ as $k\to\infty$. Hence
\begin{eqnarray*}
\limsup_{\z\to\x}\overline{J_f}(\z)&=&\lim_{k\to\infty}\overline{J_f}(\x_{n_k})\,=\,\lim_{k\to\infty}f(\x_{n_k},\y_{n_k})\\
&=&f(\x,\y)\leq\max_\z\,\{\,f(\x,\z):\z\in\K_\x\,\}\,=\,\overline{J_f}(\x),
\end{eqnarray*}
i.e., $\limsup_{\z\to\x}\overline{J_f}(\z)\leq\overline{J_j}(\x)$, the desired result. $\qed$
\end{proof}
We will need also the following intermediate result.
\begin{theorem}
\label{th-over-under}
Let $\B\subset\R^n$ be a compact set and $\overline{J_f}:\B\to\R$ a bounded and upper semi-continuous function.
Then there exists a sequence of polynomials $\{p_k:k\in\N\}\subset\R[\x]$
such that $p_k(\x)\geq \overline{J_f}(\x)$ for all $\x\in\B$ and 
\begin{equation}
\label{th-up}
\lim_{k\to\infty}\:\int_\B\vert \,p_k(\x)-\overline{J_f}(\x)\vert\,d\x\,=\,0\quad\mbox{[Convergence in $L_1(\B)$]}.\end{equation}
\end{theorem}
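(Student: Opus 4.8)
The plan is to approximate $\overline{J_f}$ from above by polynomials in the $L_1(\B)$ norm using a standard measure‑theoretic smoothing argument combined with the Stone–Weierstrass theorem. First I would invoke upper semi‑continuity to write $\overline{J_f}$ as a pointwise decreasing limit of continuous functions: since $\overline{J_f}$ is bounded and u.s.c.\ on the compact set $\B$, the Baire‑type construction
\[
q_\varepsilon(\x)\;:=\;\sup_{\z\in\B}\bigl(\overline{J_f}(\z)-\tfrac1\varepsilon\,\|\x-\z\|\bigr),\qquad \varepsilon>0,
\]
produces a family of Lipschitz functions with $q_\varepsilon\geq\overline{J_f}$ on $\B$ and $q_\varepsilon(\x)\downarrow\overline{J_f}(\x)$ as $\varepsilon\downarrow0$ for every $\x\in\B$ (this is where u.s.c.\ is used, via the fact that the sup is attained and the maximizing $\z$ converges to $\x$). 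Since all $q_\varepsilon$ are uniformly bounded (by the bound on $\overline{J_f}$) and $\B$ has finite Lebesgue volume, dominated convergence gives $\int_\B|q_\varepsilon-\overline{J_f}|\,d\x\to0$.

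Next I would fix $\varepsilon$ small enough that this integral is below $\delta/2$, and then approximate the continuous function $q_\varepsilon$ uniformly on the compact set $\B$ by a polynomial $\tilde p$ with $\|q_\varepsilon-\tilde p\|_{\infty,\B}\leq \eta$ (Stone–Weierstrass). Setting $p_k:=\tilde p+\eta$ restores the domination $p_k\geq q_\varepsilon\geq\overline{J_f}$ on $\B$ at the cost of adding $\eta\cdot\vol(\B)$ to the $L_1$ error, so choosing $\eta$ small enough (depending on $\vol(\B)$ and $\delta$) yields $\int_\B|p_k-\overline{J_f}|\,d\x\leq\delta$. Running this with $\delta=1/k$ produces the desired sequence $\{p_k\}\subset\R[\x]$ with $p_k\geq\overline{J_f}$ pointwise on $\B$ and $L_1(\B)$ convergence \eqref{th-up}.

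I do not expect any genuine obstacle here; the statement is a soft functional‑analytic fact and every ingredient (u.s.c.\ $\Rightarrow$ decreasing limit of Lipschitz functions, monotone/dominated convergence on a finite‑measure space, Stone–Weierstrass density of polynomials on a compact set, and the $+\eta$ trick to keep domination from above) is routine. The one point requiring a little care is the interplay between \emph{pointwise} domination, which must be preserved exactly, and the \emph{integral} approximation, which can only be made small — this is precisely why the final polynomial is shifted up by the sup‑norm error $\eta$ rather than taken to be the Weierstrass approximant itself. A small additional remark is that one never needs explicit control of the \emph{degree} of $p_k$ (it may grow with $k$), which is what makes the argument painless; the effective, SDP‑computable version of this approximation is what the later sections of the paper supply.
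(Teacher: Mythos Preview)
Your proposal is correct and follows essentially the same route as the paper: approximate the bounded u.s.c.\ function from above by continuous functions, use a convergence theorem to get $L_1$ closeness, then apply Stone--Weierstrass plus the ``shift up by $\eta$'' trick to pass to dominating polynomials. The only cosmetic difference is that the paper invokes an abstract result (Ash, Theorem~A6.6) for the decreasing sequence of continuous majorants and then the Monotone Convergence Theorem, whereas you construct the majorants explicitly via the sup-convolution $q_\varepsilon$ and invoke dominated convergence---these are interchangeable here.
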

\begin{proof}
To  prove (\ref{up}) observe that $\overline{J_f}$ being bounded and upper semi-continuous on $\B$,
there exists a nonincreasing sequence $(f_k)$, $k\in\N$, 
of bounded continuous functions $f_k:\B\to \R$ such that
$f_k(\x)\downarrow \overline{J_f}(\x)$ for all $\x\in\B$, as $k\to\infty$; see e.g. Ash \cite[Theorem A6.6, p. 390]{ash}.
Moreover, by the Monotone Convergence Theorem:
\[\int_{\B} f_k(\x)\,d\x\:\to\:\int_{\B} \overline{J_f}(\x)\,d\x\qquad\mbox{as $k\to\infty$},\]
and so
\[\int_{\B} \vert f_k(\x)-\overline{J_f}(\x)\vert\,d\x\,=\,
\int_{\B} (f_k(\x)-\overline{J_f}(\x))\,d\x\:\to\:0\qquad\mbox{as $k\to\infty$,}\]
that is, $f_k\to\overline{J_f}$ for the $L_1(\B)$-norm.
Next, by the Stone-Weierstrass  theorem, for every $k\in\N$, there exists $p'_k\in\R[\x]$ such that
$\sup_{\x\in\B}\vert p'_k-f_k\vert<(2k)^{-1}$ and so $p_k:=p'_k+(2k)^{-1}\geq f_k\geq \overline{J_f}$ on $\B$.
In addition,
\begin{eqnarray*}
\lim_{k\to\infty}\int_{\B}\vert p_k(\x)-\overline{J_f}(\x)\vert\,d\x&=&
\lim_{k\to\infty}\int_{\B}\underbrace{\vert p_k(\x)-f_k(\x)\vert}_{\leq k^{-1}}+\vert f_k(\x)-\overline{J_f}(\x)\vert\,d\x\\
&\leq&\lim_{k\to\infty}\left(k^{-1}{\rm vol}(\B)+\int_\B\vert f_k(\x)-\overline{J_f}(\x)\vert\,d\x\right)\\
&\leq&\lim_{k\to\infty}\int_\B\vert f_k(\x)-\overline{J_f}(\x)\vert\,d\x\,=\,0.\qquad\qed\end{eqnarray*}
\end{proof}
The following result is an immediate consequence of Theorem \ref{th-over-under}.
\begin{cor}
\label{over-under}
Let $\K\subset\R^n\times\R^m$ be compact.
If $\K_\x\neq\emptyset$ for every $\x\in\B$, there exists a sequence of polynomials $\{p_k:k\in\N\}\subset\R[\x]$, 
such that $p_k(\x)\geq f(\x,\y)$ for all $\y\in\K_\x$, $\x\in\B$, and 
\begin{equation}
\label{up}
\lim_{k\to\infty}\:\int_\B\vert \,p_k(\x)-\overline{J_f}(\x)\vert\,d\x\,=\,0\quad\mbox{[Convergence in $L_1(\B)$]}.\end{equation}
\end{cor}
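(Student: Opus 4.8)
The plan is to apply Theorem~\ref{th-over-under} directly to the function $\overline{J_f}$ defined in (\ref{upper}), so that the entire content of the proof is to check that, under the standing assumption $\K_\x\neq\emptyset$ for every $\x\in\B$, this function is bounded and upper semi-continuous on the compact set $\B$; once that is done the conclusion is read off verbatim.

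First I would verify that $\overline{J_f}$ is real-valued and bounded on $\B$. Since $\K$ is compact, each slice $\K_\x=\{\y:(\x,\y)\in\K\}$ is a closed subset of a compact set, hence itself compact; by hypothesis it is also nonempty, so the polynomial $\y\mapsto f(\x,\y)$ attains a finite maximum on $\K_\x$, and therefore $\overline{J_f}(\x)\in\R$ rather than $-\infty$. Moreover $\vert\overline{J_f}(\x)\vert\le\sup_{(\x,\y)\in\K}\vert f(\x,\y)\vert<\infty$ because the polynomial $f$ is continuous on the compact set $\K$, and this bound does not depend on $\x$; hence $\overline{J_f}$ is bounded on $\B$. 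Upper semi-continuity of $\overline{J_f}$ is precisely the Lemma stated just before Theorem~\ref{th-over-under}, and $\B$, being a box or an ellipsoid, is compact, so all the hypotheses of Theorem~\ref{th-over-under} are met.

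Invoking Theorem~\ref{th-over-under} then produces a sequence $(p_k)\subset\R[\x]$ with $p_k(\x)\ge\overline{J_f}(\x)$ for every $\x\in\B$ and $\int_\B\vert p_k(\x)-\overline{J_f}(\x)\vert\,d\x\to0$ as $k\to\infty$, which is exactly (\ref{up}). It remains only to translate the pointwise majorization into the form stated in the corollary: for any $\x\in\B$ and any $\y\in\K_\x$ we have $p_k(\x)\ge\overline{J_f}(\x)=\max_{\z\in\K_\x}f(\x,\z)\ge f(\x,\y)$, which is the asserted inequality $p_k(\x)\ge f(\x,\y)$ for all $\y\in\K_\x$, $\x\in\B$. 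This completes the argument.

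I do not anticipate any genuine difficulty here. The two points that deserve a word of care are (i) that the nonemptiness assumption on the slices $\K_\x$ is exactly what prevents $\overline{J_f}$ from taking the value $-\infty$ and thus makes it a legitimate bounded function to which Theorem~\ref{th-over-under} applies, and (ii) that the uniform bound on $\vert f\vert$ uses compactness of the whole set $\K$, and not merely compactness of its individual slices.
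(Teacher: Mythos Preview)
Your proposal is correct and matches the paper's approach exactly: the corollary is stated there as an immediate consequence of Theorem~\ref{th-over-under}, and you have simply spelled out the verification that $\overline{J_f}$ is bounded (using compactness of $\K$ and nonemptiness of the slices $\K_\x$) and upper semi-continuous (by the preceding Lemma), then read off the conclusion. Nothing is missing.
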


\subsection{Inner approximations of $\bR_f$}

Let $\K$ be as in (\ref{basic-K}) with $\K_\x$ as in (\ref{setKx}) and assume that $\B$ and $\bR_f$ in (\ref{robust-set})
have nonempty interior.

\begin{theorem}
\label{robust-approx}
Let $\K\subset\R^n\times\R^m$ in (\ref{basic-K}) be compact and
$\K_\x\neq\emptyset$ for every $\x\in\B$. Assume that
$\{\,\x\in\B: \overline{J_f}(\x)\,=\,0\,\}$ has Lebesgue measure zero,
and for every $k\in\N$, let $\bR_f^k:=\{\,\x\in\B:p_k(\x)\leq0\,\}$, where $p_k\in\R[\x]$ is as in Corollary \ref{over-under}.  
Then $\bR_f^k\subset\bR_f$ for every $k$, and
\begin{equation}
\label{vol1}
{\rm vol}\left(\bR_f\setminus \bR_f^k\right)\,\to\,0\quad\mbox{ as }k\to\infty.
\end{equation}
\end{theorem}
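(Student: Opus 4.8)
The plan is to establish the two assertions separately: the inclusion $\bR_f^k\subset\bR_f$ for every $k$, and the volume convergence \eqref{vol1}. The first assertion is immediate from Corollary \ref{over-under}: by construction $p_k(\x)\geq f(\x,\y)$ for all $\y\in\K_\x$ and all $\x\in\B$, hence $p_k(\x)\geq\overline{J_f}(\x)$ on $\B$. Consequently, if $\x\in\bR_f^k$, i.e. $p_k(\x)\leq0$, then $\overline{J_f}(\x)\leq p_k(\x)\leq 0$, which is exactly the statement $f(\x,\y)\leq0$ for all $\y\in\K_\x$; thus $\x\in\bR_f$. (Here I use that $\overline{J_f}$ is well-defined and finite because $\K$ is compact and $\K_\x\neq\emptyset$.)

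For the convergence, the idea is to control the symmetric difference by the $L_1$-distance between $p_k$ and $\overline{J_f}$. First note $\bR_f\setminus\bR_f^k=\{\x\in\B:\overline{J_f}(\x)\leq0<p_k(\x)\}$ since $\bR_f^k\subset\bR_f$. On this set we have $p_k(\x)>0\geq\overline{J_f}(\x)$, hence $0<p_k(\x)-\overline{J_f}(\x)=|p_k(\x)-\overline{J_f}(\x)|$. The difficulty is that $p_k-\overline{J_f}$ need not be bounded below away from zero, so I cannot directly bound $\mathrm{vol}(\bR_f\setminus\bR_f^k)$ by a multiple of the $L_1$-norm. Instead, fix $\varepsilon>0$ and split $\bR_f\setminus\bR_f^k$ according to whether $\overline{J_f}(\x)<-\varepsilon$ or $-\varepsilon\leq\overline{J_f}(\x)\leq0$. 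On the first piece, $p_k(\x)-\overline{J_f}(\x)>\varepsilon$, so the Lebesgue measure of that piece is at most $\varepsilon^{-1}\int_\B|p_k-\overline{J_f}|\,d\x$, which tends to $0$ as $k\to\infty$ by \eqref{up}. The second piece is contained in $\{\x\in\B:-\varepsilon\leq\overline{J_f}(\x)\leq0\}$, whose measure is independent of $k$ and, by the hypothesis that $\{\x\in\B:\overline{J_f}(\x)=0\}$ has measure zero together with continuity of measure from above, shrinks to $0$ as $\varepsilon\downarrow0$.

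Putting the two estimates together: for every $\varepsilon>0$,
\[
\limsup_{k\to\infty}\mathrm{vol}\bigl(\bR_f\setminus\bR_f^k\bigr)\,\leq\,\mathrm{vol}\bigl(\{\x\in\B:-\varepsilon\leq\overline{J_f}(\x)\leq0\}\bigr),
\]
and letting $\varepsilon\downarrow0$ the right-hand side tends to $\mathrm{vol}(\{\x\in\B:\overline{J_f}(\x)=0\})=0$. Hence $\mathrm{vol}(\bR_f\setminus\bR_f^k)\to0$, which is \eqref{vol1}. The step I expect to require the most care is the measurability and the "continuity from above" argument for the sets $\{-\varepsilon\leq\overline{J_f}\leq0\}$: $\overline{J_f}$ is upper semi-continuous (by the preceding lemma), hence Borel measurable, so these are Borel sets of finite measure, nested decreasingly in $\varepsilon$ with intersection $\{\overline{J_f}=0\}$, and the hypothesis gives that this intersection is null — so standard continuity of measure applies. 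Everything else is a routine Markov-type inequality combined with Corollary \ref{over-under}.
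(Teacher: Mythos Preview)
Your proof is correct and follows essentially the same strategy as the paper's: both split according to whether $\overline{J_f}$ is bounded away from zero (the paper uses the discrete levels $-1/\ell$ where you use a continuous parameter $-\varepsilon$), control the ``away'' piece via $L_1$-convergence (the paper phrases this as convergence in measure, you use the equivalent Markov inequality directly), and kill the ``near-zero'' piece by the null hypothesis on $\{\overline{J_f}=0\}$ and continuity of measure. Your write-up is arguably a bit more direct since you bound $\mathrm{vol}(\bR_f\setminus\bR_f^k)$ itself rather than arguing via $\mathrm{vol}(\bR_f^k)\to\mathrm{vol}(\bR_f)$.
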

\begin{proof}
By Corollary \ref{over-under}, $p_k\to \overline{J_f}$ in $L_1(\B)$ as $k\to\infty$. Therefore by \cite[Theorem 2.5.1]{ash},
$p_k$ converges to $\overline{J_f}$ in measure, that is, for every $\epsilon>0$,
\begin{equation}
\label{aux}
\lim_{k\to\infty}\:{\rm vol}\left(\{\,\x\::\:\vert p_k(\x)-\overline{J_f}(\x)\vert\,\geq\,\epsilon\}\right)\,=\,0.\end{equation}

Next, as $\overline{J_f}$ is upper semi-continuous
on $\B$, the set $\{\x: \overline{J_f}(\x)<0\}$ is open and as the set $\{\x\in\B:\overline{J_f}(\x)=0\}$ has Lebesgue measure zero,
\begin{eqnarray}
\nonumber
{\rm vol}(\bR_f)\,=\,{\rm vol}\left(\{\x\in \B : \overline{J_f}(\x)<0\}\right)&=&
{\rm vol}\left(\bigcup_{\ell=1}^\infty \{\x\in \B : \overline{J_f}(\x)\leq -1/\ell\}\right)\\
\nonumber
&=&\lim_{\ell\to\infty}{\rm vol}\left(\{\x\in \B : \overline{J_f}(\x)\leq -1/\ell\}\right)\\
\label{aux4}
&=&\lim_{\ell\to\infty}{\rm vol}\left(\bR_f(\ell)\right),
\end{eqnarray}
where $\bR_f(\ell):=\{\x\in\B:\overline{J_f}(\x)\leq -1/\ell\}$. Next,
$\bR_f(\ell)\subseteq\bR_f$ for every $\ell\geq1$, and
\[{\rm vol}\,(\bR_f(\ell))\,=\,{\rm vol}\left(\bR_f(\ell)\cap\{\x: p_k(\x)>0\}\right)
+{\rm vol}\left(\bR_f(\ell)\cap\{\x: p_k(\x)\leq0\}\right).\]
Observe that by (\ref{aux}), ${\rm vol}\left(\bR_f(\ell)\cap\{\x: p_k(\x)>0\}\right)\to0$ as $k\to\infty$.
Therefore,
\begin{eqnarray}
\label{aux3}
{\rm vol}(\bR_f(\ell))&=&\lim_{k\to\infty}
{\rm vol}\left(\bR_f(\ell)\right.\cap\underbrace{\left.\{\x: p_k(\x)\leq0\}\right)}_{=\bR_f^k}\\
\nonumber
&\leq&\lim_{k\to\infty}{\rm vol}\,(\bR_f^k)\,\leq\,{\rm vol}\,(\bR_f).\end{eqnarray}
As $\bR_f^k\subset\bR_f$ for all $k$, letting $\ell\,\to\infty$ and using (\ref{aux4}) yields the desired result.
$\qed$
\end{proof}
Theorem \ref{robust-approx} states that the (potentially complicated) set $\bR_f$ 
can be approximated arbitrary well from inside by sublevel sets of polynomials.
In particular, for application in robust optimization problems where one wishes to optimize a function over some set $\mathbf{\Omega}\,\cap\bR_f$ for some $\mathbf{\Omega}\subset\B$, one may reinforce the complicated (and intractable) constraint $\x\in\bR_f\cap\,\mathbf{\Omega}$ by instead considering 
the inner approximation $\{\x\in\mathbf{\Omega}:p_k(\x)\leq0\}$ obtained with the two much simpler constraints $\x\in\mathbf{\Omega}$ and $p_k(\x)\leq0$. The resulting conservatism becomes negligible as $k$ increases. 

\subsection{Outer approximations of $\D_f$}

Let $\B$ and $\D_f$ in (\ref{design-set}) have nonempty interior.

\begin{cor}
\label{design-approx}
Let $\K\subset\R^n\times\R^m$ in (\ref{basic-K}) be compact and
$\K_\x\neq\emptyset$ for every $\x\in\B$. Assume that
$\{\,\x\in\B:\overline{J_f}(\x)\,=\,0\,\}$ has Lebesgue measure zero,
and for every $k\in\N$, let $\D_f^k:=\{\,\x\in\B :p_k(\x)\geq0\,\}$, where $p_k\in\R[\x]$ is as in Corollary \ref{over-under}.  
Then $\D_f^k\supset\D_f$ for every $k$, and
\begin{equation}
\label{vol2}
{\rm vol}(\D_f\setminus \D_f^k)\,\to\,0\quad\mbox{ as }k\to\infty.
\end{equation}
\end{cor}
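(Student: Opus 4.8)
The plan is to mirror the proof of Theorem~\ref{robust-approx}, simply exchanging the roles of the inequalities ``$\le 0$'' and ``$\ge 0$''. First I would recast the two sets in terms of $\overline{J_f}$: since $\overline{J_f}(\x)=\max\{f(\x,\y):\y\in\K_\x\}$, we have $\D_f=\{\x\in\B:\overline{J_f}(\x)\ge 0\}$ and $\D_f^k=\{\x\in\B:p_k(\x)\ge 0\}$. The inclusion $\D_f\subseteq\D_f^k$ is then immediate: by Corollary~\ref{over-under} one has $p_k(\x)\ge f(\x,\y)$ for all $\y\in\K_\x$, hence $p_k(\x)\ge\overline{J_f}(\x)$ on $\B$, so $\overline{J_f}(\x)\ge 0$ forces $p_k(\x)\ge 0$. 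In particular $\D_f\setminus\D_f^k=\emptyset$, which yields (\ref{vol2}); the content that really needs an argument is the companion convergence $\vol(\D_f^k\setminus\D_f)\to 0$ announced in the introduction for outer approximations, and this is what I would establish next.

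For the quantitative part, write $\D_f^k\setminus\D_f=\{\x\in\B:p_k(\x)\ge 0,\ \overline{J_f}(\x)<0\}$. Fix $\epsilon>0$: if $\x$ lies in this set then either $\overline{J_f}(\x)\le-\epsilon$, in which case $p_k(\x)-\overline{J_f}(\x)\ge\epsilon$, or else $-\epsilon<\overline{J_f}(\x)<0$; therefore
\[\D_f^k\setminus\D_f\ \subseteq\ \{\x:\ \vert p_k(\x)-\overline{J_f}(\x)\vert\ge\epsilon\}\cup\{\x\in\B:\ -\epsilon<\overline{J_f}(\x)<0\},\]
whence $\vol(\D_f^k\setminus\D_f)\le\vol(\{\vert p_k-\overline{J_f}\vert\ge\epsilon\})+\vol(\{-\epsilon<\overline{J_f}<0\})$. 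Letting $k\to\infty$, the first term tends to $0$ because $p_k\to\overline{J_f}$ in measure, which follows from the $L_1(\B)$-convergence of Corollary~\ref{over-under} exactly as in the proof of Theorem~\ref{robust-approx} via \cite[Theorem~2.5.1]{ash}; hence $\limsup_{k\to\infty}\vol(\D_f^k\setminus\D_f)\le\vol(\{-\epsilon<\overline{J_f}<0\})$ for every $\epsilon>0$. Finally, taking $\epsilon=1/\ell\downarrow 0$, the sets $\{-1/\ell<\overline{J_f}<0\}$ decrease to the empty set, so their volumes tend to $0$ by continuity of the Lebesgue measure from above (using $\vol(\B)<\infty$), and therefore $\vol(\D_f^k\setminus\D_f)\to 0$.

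As this is essentially a transcription of an argument already carried out, I do not anticipate a genuine obstacle; the only points requiring care are that convergence in measure be invoked for each fixed $\epsilon$ before $\epsilon\to 0$ is taken, and that the level sets in play be measurable, which holds because $\overline{J_f}$ is upper semi-continuous. (Incidentally, for this outer-approximation half the hypothesis that $\{\x\in\B:\overline{J_f}(\x)=0\}$ be Lebesgue-null --- essential in Theorem~\ref{robust-approx} --- is not actually needed.) One could also obtain the result directly from Theorem~\ref{robust-approx} by noting that the symmetric difference of $\D_f^k\setminus\D_f$ and $\bR_f\setminus\bR_f^k$ is contained in $\{p_k=0\}\cup\{\x:\overline{J_f}(\x)=0\}$, both Lebesgue-null --- the second by hypothesis, the first because $\D_f$ having nonempty interior forces $p_k\not\equiv 0$ --- so that $\vol(\D_f^k\setminus\D_f)=\vol(\bR_f\setminus\bR_f^k)\to 0$. $\qed$
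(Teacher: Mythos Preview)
Your proof is correct. You rightly observe that the inclusion $\D_f\subset\D_f^k$ makes (\ref{vol2}) as literally stated trivial, and that the substantive claim is $\vol(\D_f^k\setminus\D_f)\to 0$; the paper's own proof in fact establishes exactly this latter convergence.

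The paper's argument is precisely the complementation route you sketch at the end as an alternative: it writes $\D_f=\B\setminus\Delta_f$ with $\Delta_f=\{\x\in\B:\overline{J_f}(\x)<0\}$, uses the null--level-set hypothesis to identify $\vol(\Delta_f)$ with $\vol(\bR_f)$, invokes Theorem~\ref{robust-approx} to obtain $\vol(\{p_k\le 0\})\to\vol(\Delta_f)$, and then passes to complements in $\B$, discarding the null set $\{p_k=0\}$. Your primary argument --- splitting $\D_f^k\setminus\D_f$ according to whether $\overline{J_f}\le-\epsilon$ or lies in the thin band $(-\epsilon,0)$, using convergence in measure for the first piece, and then sending $\epsilon\downarrow 0$ --- is a different, self-contained route that does not pass through Theorem~\ref{robust-approx}. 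As you note, it also shows that the hypothesis $\vol(\{\overline{J_f}=0\})=0$ is not needed for this outer-approximation direction. The paper's path is marginally shorter once Theorem~\ref{robust-approx} is in hand, while yours isolates exactly what is required and yields a slightly sharper statement.
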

\begin{proof}
The proof uses same arguments as in the proof of Theorem \ref{robust-approx}.
Indeed, $\D_f=\B\setminus\Delta_f$ with
\begin{eqnarray*}
\Delta_f&:=&\{\,\x\in\B\::\: f(\x,\y)\,<\,0\mbox{ for all $\y\in\K_\x$}\}\\
&=&\{\,\x\in\B\::\: \sup_\y\{\,f(\x,\y)\::\:\y\in\K_\x\}\,<\,0\}\\
&=&\{\,\x\in\B\::\: \overline{J_{f}}(\x)\,<\,0\},
\end{eqnarray*}
and since $\{\x\in\B\::\: \overline{J_f}(\x)\,=\,0\}$ has Lebesgue measure zero,
\[{\rm vol}(\Delta_f)={\rm vol}\left(\{\x\in\B\::\: \overline{J_{f}}(\x)\,\leq\,0\}\right).\]
Hence by Theorem \ref{robust-approx}, 
\[\lim_{k\to\infty}{\rm vol}\left(\{\x\in\B\::\:p_k(\x)\leq0\}\right)\,=\,{\rm vol}\left(\Delta_f\right),\]
which in turn implies the desired result
\begin{eqnarray*}
\lim_{k\to\infty}{\rm vol}\left(\{\x\in\B\::\:p_k(\x)\geq0\}\right)&=&
\lim_{k\to\infty}{\rm vol}\left(\{\x\in\B\::\:p_k(\x)>0\}\right)\\
&=&{\rm vol}\left(\B\setminus\Delta_f\right)\,=\,{\rm vol}\left(\D_f\right),\end{eqnarray*}
because ${\rm vol}\left(\{\x\in\B:p_k(\x)=0\}\right)=0$ for every $k$.
$\quad\qed$
\end{proof}
Corollary \ref{design-approx} states that the set $\D_f$ 
can be approximated arbitrary well from outside by sublevel sets of polynomials.
In particular, if $\mathbf{\Omega}\subset\B$ and one wishes to work with
$\D_f\cap\,\mathbf{\Omega}$ and not its lifted representation $\{\,f(\x,\y)\geq0;\,\x\in\mathbf{\Omega};\:(\x,\y)\in\K\,\}$, one may instead
use the outer approximation $\{\x\in\mathbf{\Omega}:p_k(\x)\geq0\}$. 
The resulting laxism  becomes negligible as $k$ increases.

\subsection{Practical computation}

In this section we follow \cite{lass-param} and show how to compute  a sequence of polynomials $(p_k)\subset\R[\x]$, $k\in\N$,
as defined in Theorem \ref{robust-approx}. 
With $\K\subset\R^n\times\R^m$ as in (\ref{basic-K}) and compact, 
we assume that we know some $M>0$
such that $M-\Vert \y\Vert^2\geq0$ whenever $(\x,\y)\in\K$. Next, and possibly after re-scaling of the $g_j$'s,
we may and will set $M=1$, $\B=[-1,1]^n$. Next, let
\begin{eqnarray*}
\label{mom-lebesgue}
\gamma_\alpha\,:=\,&=&
\frac{1}{\lambda(\B)}\int_\B\x^\alpha\,d\lambda(\x),\qquad\alpha\in\N^n\\
&=&\left\{\begin{array}{l}\mbox{$0$ if $\alpha_i$ is odd for some $i$,}\\
\displaystyle\prod_{i=1}^n (\alpha_i+1)^{-1}\mbox{ otherwise}\end{array}\right.,
\end{eqnarray*}
be the moments of the (scaled) Lebesgue measure $\lambda$ on $\B$. In fact, as already mentioned,
one may consider any set $\B$ for which all moments of the Lebesgue measure (or any Borel measure with support exactly equal to $\B$)
are easy to compute (for instance an ellipsoid).

Moreover, letting  $g_{s+1}(\y):=1-\Vert \y\Vert^2$, and $x_i\mapsto \theta_i(\x):=1-x_i^2$, $i=1,\ldots,n$,
for convenience we redefine $\K\subset\R^n\times\R^m$ to be the basic semi-algebraic set
\begin{equation}
\label{new-K}
\K\,=\,\{\,(\x,\y)\::\:g_j(\x,\y)\,\geq\,0,\quad j=1,\ldots,s+1;\:\theta_i(\x)\geq0,\:i=1,\ldots,n\},
\end{equation}
and let $g_0\in\R[\x,\y]$ be the constant polynomial equal to $1$.
With $v_j:=\lceil{\rm deg}(g_j)/2\rceil$, $j=0,\ldots,m$, 
and for fixed $k\geq\max_{j}[v_j]$, consider the following optimization problem:
\begin{equation}
\label{sdp-k}
\begin{array}{rl}
\rho_k=\displaystyle\min_{p,\sigma_j,\psi_i}&\displaystyle\int_{\B} p(\x)\,d\lambda(\x)\\
\mbox{s.t.}&p(\x)-f(\x,\y)=\displaystyle\sum_{j=0}^{s+1}\sigma_j(\x,\y)\,g_j(\x,\y)+\sum_{i=1}^n \psi_i(\x,\y)\,\theta_i(\x)\\
&p\in\R[\x]_{2k};\:\sigma_j\in\Sigma_{k-v_j}[\x,\y],\:j=0,\ldots,s+1\\
&\psi_i\in\Sigma_{k-1}[\x,\y],\:i=1,\ldots,n.
\end{array}\end{equation}
For a feasible solution $p\in\R[\x]_{2k}$ of  (\ref{sdp-k}) the constraint certifies that $p(\x)-f(\x,\y)\geq0$ for all $(\x,\y)\in\K$,
and therefore, $p(\x)\geq\overline{J_f}(\x)$ for all $\x\in\B$. As minimizing $\displaystyle\int_\B p(\x)d\lambda(\x)$
is the same as minimizing $\displaystyle\int_\B (p(\x)-\overline{J_j}(\x))d\lambda(\x)$, by solving (\ref{sdp-k}) 
one tries to obtain a polynomial of degree at most $2k$ which dominates $\overline{J_f}$ on $\B$ 
and minimizes the $L_1$-nom $\displaystyle\int_\B (\vert p-\overline{J_j}\vert d\lambda$.
In other words, an optimal solution of (\ref{sdp-k}) is the best $L_1$-norm approximation in $\R[\x]_{2k}$
of $\overline{J_f}$ on $\B$ (from above).\\

It turns out that problem (\ref{sdp-k}) is a semidefinite program.
Indeed :

\noindent
- The criterion $\int_\B p(\x)\,d\lambda(\x)$ is linear in the coefficients $\p=(p_\alpha)$, $\alpha\in\N^n_{2k}$,
of the unknown polynomial $p\in\R[\x]_k$. In fact,
\[\int_\B p(\x)\,d\lambda(\x)\,=\,\sum_{\alpha\in\N^n_{2k}}p_\alpha\,\underbrace{\int_\B\x^\alpha\,d\lambda(\x)}_{\gamma_\alpha}\,=\,\sum_{\alpha\in\N^n_{2k}}p_\alpha\,\gamma_\alpha.\]
- The constraint
\[p(\x)-f(\x,\y)=\displaystyle\sum_{j=0}^{s+1}\sigma_j(\x,\y)\,g_j(\x,\y)+\sum_{i=1}^n \psi_i(\x,\y)\,\theta_i(\x),\]
with $p\in\R[\x]_{2k};\:\sigma_j\in\Sigma_{k-v_j}[\x,\y],\:j=0,\ldots,s$, and $\psi_i\in\Sigma_{k-1}[\x,\y],\:k=1,\ldots,n$,
reduces to 
\begin{itemize}
\item linear equality constraints between the coefficients of the polynomials $p,\sigma_j$ and $\psi_i$, to satisfy the identity, and
\item Linear Matrix Inequality (LMI) constraints to ensure that $\sigma_j$ and $\psi_i$ are all SOS polynomials of degree
bounded by $2(k-v_j)$ and $2(k-1)$ respectively.
\end{itemize}
The dual of the semidefinite program (\ref{sdp-k}) reads:
\begin{equation}
\label{sdp-k-dual}
\begin{array}{rl}
\rho_k^*=\displaystyle\min_{\z}&L_\z(f)\\
\mbox{s.t.}&\M_{k-v_j}(g_j\,\z)\,\succeq\,0,\quad j=0,\ldots,s+1\\
&\M_{k-1}(\theta_i\,\z)\,\succeq\,0,\quad i=1,\ldots,n\\
&L_\z(\x^\alpha)\,=\,\gamma_\alpha,\quad \alpha\in\N^n_{2k},
\end{array}\end{equation}
where $\z=(z_{\alpha\beta})$, $(\alpha,\beta)\in\N^{n+m}_{2k}$, and $L_\z:\R[\x,\y]\to\R$ is the Riesz functional introduced in \S \ref{notation}. 
Similarly, $\M_k(g_j\,\z)$ (resp. $\M_k(\theta_i\,\z)$) is the localizing matrix associated with the sequence
$\z$ and the polynomial $g_j$ (resp. $\theta_i$), also introduced in \S \ref{notation} (but now with $(\x,\y)$ instead of $\x$). 

Next we extend \cite[Theorem 3.5]{lass-param}  and prove that both (\ref{sdp-k}) and its dual
(\ref{sdp-k-dual}) have an optimal solution whenever $\K$ has nonempty interior.
\begin{theorem}
\label{th-sdp}
Let $\K$ be as in (\ref{new-K}) with nonempty interior,
and assume that $\K_\x\neq\emptyset$ for every $\x\in\B$. Then:

There is no duality gap between the semidefinite program (\ref{sdp-k}) and its dual (\ref{sdp-k-dual}).
Moreover (\ref{sdp-k}) (resp. (\ref{sdp-k-dual})) has an optimal solution $p_k^*\in\R[\x]_{2k}$
(resp. $\z^*=(z^*_{\alpha\beta})$, $(\alpha,\beta)\in\N^{n+m}_{2k}$), and
\begin{equation}
\label{confirm}
\lim_{k\to\infty}\:\int_\B \vert p^*_{k}(\x)-\overline{J_f}(\x)\vert\,d\x\,=\,0\quad\mbox{[Convergence in $L_1(\B)$]}.\end{equation}
\end{theorem}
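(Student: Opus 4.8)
The plan is to establish the three assertions of the theorem — absence of a duality gap, attainment of an optimal solution in each of (\ref{sdp-k}) and (\ref{sdp-k-dual}), and the $L_1$-convergence (\ref{confirm}) — essentially separately. The hard part is to produce a strictly feasible (Slater) point for the moment program (\ref{sdp-k-dual}): once this is done, the standard strong-duality theorem for semidefinite programs yields at once $\rho_k=\rho_k^*$ and attainment in the primal (\ref{sdp-k}). Attainment in the dual (\ref{sdp-k-dual}) will instead come for free from compactness of its feasible set, and the convergence (\ref{confirm}) will be deduced from Corollary \ref{over-under} together with Putinar's Positivstellensatz and the monotonicity of $k\mapsto\rho_k$. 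Throughout I use that $\overline{J_f}$ is bounded (since $\K$ is compact and $\K_\x\neq\emptyset$ for all $\x$) and upper semi-continuous, hence in $L_1(\B)$, and that every $p$ feasible for (\ref{sdp-k}) satisfies $p\geq\overline{J_f}$ on $\B$; here $\lambda$ is the normalized Lebesgue measure on $\B$ whose moments are the $\gamma_\alpha$.

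To build the Slater point I would use the hypothesis that $\K$ has nonempty interior. First pick an open ball $U\subset\mathrm{int}\,\K$ on which every $g_j$ $(j=0,\dots,s+1)$ and every $\theta_i$ is strictly positive; such a ball exists because $\mathrm{int}\,\K$ minus the finitely many zero sets $\{g_j=0\}$ and $\{\theta_i=0\}$ is a nonempty open set (discard first any trivial constraint $g_j\equiv0$). Next, using a Borel measurable selection $\x\mapsto\y(\x)\in\K_\x$ — which exists since the correspondence $\x\mapsto\K_\x$ has nonempty compact values and closed graph — form the probability measure $\mu:=\delta\,\mu_1+\mu_2$ on $\K$, where $\mu_1$ is the normalized Lebesgue measure on $U$, $\delta>0$ is small enough that $\lambda-\delta\,\nu\geq0$ with $\nu$ the $\x$-marginal of $\mu_1$, and $\mu_2$ is the image of $\lambda-\delta\,\nu$ under $\x\mapsto(\x,\y(\x))$. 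Then the $\x$-marginal of $\mu$ is exactly $\lambda$, and $\mu$ has density at least $\delta/\vol(U)>0$ on $U$, so its truncated moment vector $\z$ satisfies $L_\z(\x^\alpha)=\gamma_\alpha$ for $|\alpha|\leq2k$ and, for any nonzero polynomial $\q$ of the relevant degree,
\[\langle\q,\M_{k-v_j}(g_j\,\z)\,\q\rangle=\int\q^2\,g_j\,d\mu\;\geq\;\delta\int_U\q^2\,g_j\,d\mu_1\;>\;0\]
because $g_j>0$ on the open set $U$; likewise for the $\theta_i$-localizing matrices and the moment matrix $\M_k(\z)$. Hence all matrices in (\ref{sdp-k-dual}) are positive definite at $\z$, i.e. $\z$ is a Slater point. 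This step is the only delicate one: the ball $U$ must avoid every hypersurface $\{g_j=0\}$ and $\{\theta_i=0\}$, yet the measure $\mu$ must keep $\lambda$ as its \emph{exact} $\x$-marginal, and the mixture $\mu=\delta\mu_1+\mu_2$ is tailored to reconcile these two requirements.

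For attainment in (\ref{sdp-k-dual}) it suffices to observe that its feasible set is compact. Indeed, the constraints $\M_{k-1}(\theta_i\,\z)\succeq0$ (with $\theta_i=1-x_i^2$) and $\M_{k-v_{s+1}}(g_{s+1}\,\z)\succeq0$ (with $g_{s+1}=1-\Vert\y\Vert^2$), together with $\M_k(\z)\succeq0$ and $L_\z(1)=\gamma_0=1$, force $|z_{\alpha\beta}|\leq1$ for all $(\alpha,\beta)\in\N^{n+m}_{2k}$ by the usual argument (first bound $L_\z(\x^{2\alpha}\y^{2\beta})\leq L_\z(1)$, then use positive semidefiniteness of $\M_k(\z)$). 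Thus the feasible set is a closed bounded subset of a finite-dimensional space and $\z\mapsto L_\z(f)$ is continuous, so an optimal $\z^*$ is attained.

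Finally, to get (\ref{confirm}), fix $\epsilon>0$. By Corollary \ref{over-under} there is $q\in\R[\x]$ with $q\geq\overline{J_f}$ on $\B$ and $\int_\B(q-\overline{J_f})\,d\x<\epsilon$. On $\K$ one has $q(\x)-f(\x,\y)\geq\overline{J_f}(\x)-f(\x,\y)\geq0$, hence $q+\epsilon-f\geq\epsilon>0$ on the compact set $\K$; since the quadratic module associated with (\ref{new-K}) is Archimedean (it contains $1-x_i^2$ and $1-\Vert\y\Vert^2$), Putinar's Positivstellensatz provides SOS multipliers $\sigma_j,\psi_i$ and an integer $k(\epsilon)$ with $q+\epsilon-f=\sum_{j=0}^{s+1}\sigma_j\,g_j+\sum_{i=1}^n\psi_i\,\theta_i$ and all degrees at most $2k(\epsilon)$. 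Then $(q+\epsilon,\sigma_j,\psi_i)$ is feasible for (\ref{sdp-k}) at level $k(\epsilon)$ (this also shows (\ref{sdp-k}) is feasible for all large $k$), so $\rho_{k(\epsilon)}\leq\int_\B(q+\epsilon)\,d\x<\int_\B\overline{J_f}\,d\x+\epsilon\,(1+\vol(\B))$. Since $\rho_k\geq\int_\B\overline{J_f}\,d\x$ for every $k$ (each feasible $p$ dominates $\overline{J_f}$) and $k\mapsto\rho_k$ is nonincreasing (feasibility at level $k$ implies feasibility at level $k+1$), letting $\epsilon\downarrow0$ gives $\rho_k\downarrow\int_\B\overline{J_f}\,d\x$. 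Hence, for an optimal $p_k^*$ of (\ref{sdp-k}), using once more $p_k^*\geq\overline{J_f}$,
\[\int_\B\bigl|p_k^*(\x)-\overline{J_f}(\x)\bigr|\,d\x=\int_\B\bigl(p_k^*(\x)-\overline{J_f}(\x)\bigr)\,d\x=\rho_k-\int_\B\overline{J_f}\,d\x\;\longrightarrow\;0,\]
which is (\ref{confirm}).
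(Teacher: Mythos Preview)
Your proof is correct and follows the same overall strategy as the paper's: exhibit a strictly feasible point for the moment program (\ref{sdp-k-dual}) via a measure on $\K$ with $\x$-marginal $\lambda$ and positive mass on an open subset of $\mathrm{int}\,\K$, deduce no duality gap and primal attainment from Slater, and obtain dual attainment from compactness of the dual feasible set (using the bounds $|z_{\alpha\beta}|\le 1$).

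Two points of comparison are worth recording. First, your Slater measure $\mu=\delta\mu_1+\mu_2$ (uniform on a ball $U\subset\mathrm{int}\,\K$, corrected by a push-forward along a measurable selection so that the $\x$-marginal is exactly $\lambda$) differs from the paper's construction, which instead disintegrates $\lambda$ via conditional probabilities $\phi(d\y\,|\,\x)$ chosen to be normalized Lebesgue on the fiber $\K_\x$ for $\x$ in the projection of an open set $O\subset\K$, and arbitrary elsewhere. Both are valid; yours is a bit more explicit about measurability (via the selection theorem) and about why each localizing matrix is positive \emph{definite}. Second, for (\ref{confirm}) the paper simply cites \cite[Theorem~3.5]{lass-param}, whereas you give a self-contained argument: approximate $\overline{J_f}$ from above by a polynomial $q$ via Corollary~\ref{over-under}, use Putinar's Positivstellensatz on $q+\epsilon-f>0$ to produce a feasible point of (\ref{sdp-k}) of controlled degree, and conclude $\rho_k\downarrow\int_\B\overline{J_f}\,d\lambda$. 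This is precisely the mechanism behind the cited result, so your route is more informative without being longer.
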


\begin{proof}
As $\K$ has a nonempty interior it contains an open set $O\subset\R^n\times\R^m$.
Let $O_\x\subset\B$ be the projection of $O$ onto $\B$, so that its ($\R^n$) Lebesgue volume is positive.
Let $\mu$ be the finite Borel measure on $\K$ defined by
\[\mu(A\times B)\,:=\,\int_{A}\phi(B\,\vert\,\x)\,d\lambda(\x),\qquad A\in\mathcal{B}(\R^n),\:B\in\mathcal{B}(\R^m),\]
where for every $\x\in O_\x$, $\phi(d\y\,\vert\,\x)$ is the probability measure on $\R^m$, supported on $\K_\x$, and defined by:
\[\phi(B\,\vert\,\x)=\vol\,(\K_\x\cap B)/{\rm vol}(\K_\x),\qquad \forall B\in\mathcal{B}(\R^m).\]
On $\B\setminus O_\x$ the probability $\phi(d\y\,\vert\,\x)$ is an arbitrary probability measure on $\K_\x$.

Let $\z=(z_{\alpha\beta})$, $(\alpha,\beta)\in\N^{n+m}_{2k}$, be the moments of $\mu$.
As $\K\supset O$, $\M_{k-v_j}(g_j\,\z)\succ0$ (resp. $\M_{k-1}(\theta_i\,\z)\succ0$)
for $j=0,\ldots,s+1$ (resp. for $i=1,\ldots,n$). Indeed 
for all non zero  vectors $\u$ (indexed by the canonical basis $(\x^\alpha\y^\beta)$),
\begin{eqnarray*}
\langle\u,\M_{d-v_j}(g_j\,\z)\u\rangle&=&\int_\K u(\x,\y)^2g_j(\x,\y)\,d\mu(\x,\y)\\
&>&\int_O u(\x,\y)^2g_j(\x,\y)\,d\mu(\x,\y)\,>\,0\end{eqnarray*}
(as $g_j$ is supposed to be non trivial).
Moreover, by construction of $\mu$,
its marginal on $\B$ is the (scaled) Lebesgue measure $\lambda$ on $\B$ and so
\[L_\z(\x^\alpha)\,=\,\int_\B\x^\alpha\,d\lambda(\x)\,=\,\gamma_\alpha,\qquad\alpha\in\N^n_{2k}.\]
In other words, $\z$ is a strictly feasible solution of (\ref{sdp-k-dual}), i.e., Slater's condition holds for the semidefinite program 
(\ref{sdp-k-dual}). By a now standard result in convex optimization, this implies that $\rho_k=\rho^*_k$, and 
(\ref{sdp-k}) has an optimal solution if $\rho_k$ is finite. So it remains to show that indeed $\rho_k$ is finite
and (\ref{sdp-k-dual}) is solvable.

Observe that from the definition of the scaled Lebesgue measure $\lambda$ on $\B$,
$L_\z(1)=\gamma_0=1$. In addition, from the constraint $\M_{k-1}(g_{s+1}\,\z)\succeq0$, and $\M_{k-1}(\theta_i\,\z)\succeq0$, $i=1,\ldots,n$, we deduce that any feasible solution $\z$ of (\ref{sdp-k-dual}) satisfies:
\[L_\z(y_\ell^{2k})\,\leq\,1,\quad \forall\ell=1,\ldots,m;\qquad L_\z(x_i^{2k})\,\leq\,1,\quad\forall i=1,\ldots,n.\]
Hence by \cite[Proposition 3.6]{lass-book} this implies $\vert z_{\alpha\beta}\vert\leq \max[\gamma_0,1]=1$ for all $(\alpha,\beta)\in\N^{n+m}_{2k}$. Therefore, the feasible set is compact as closed and bounded, which in turn implies that
(\ref{sdp-k-dual}) has an optimal solution $\z^*$. And as  Slater's condition holds for (\ref{sdp-k-dual})
the dual (\ref{sdp-k}) also has an optimal solution.  Finally (\ref{confirm}) follows from
\cite[Theorem 3.5]{lass-param}. \hspace{1cm}$\qed$
\end{proof}

\begin{rem}
\label{monotone}
{\rm In fact, in Theorem \ref{robust-approx} one may impose the 
sequence $(p_k)\subset\R[\x]$, $k\in\N$, to be monotone, i.e.,
such that $\overline{J_f}\leq p_k\leq \overline{p_{k-1}}$ on $\B$, for all $k\geq2$. 
And similarly for Corollary \ref{design-approx}. For the practical computation of 
such a monotone sequence, in the semidefinite program (\ref{sdp-k})
it suffices to include the additional constraint (or positivity certificate)
\[p^*_{k-1}(\x)-p(\x)\,=\,\sum_{i=0}^n\phi_i(\x)\,\theta_i(\x),\quad\phi_0\in\Sigma[\x]_k,\,\phi_i\in\Sigma[\x]_{k-1},\,i\geq1,\]
where $\theta_0=1$
and $p^*_{k-1}\in\R[\x]_{k-1}$ is the optimal solution computed at the previous step $k-1$. In this case 
the inner approximations $(\bR_f^k)$, $k\in\N$, form a nested sequence since $\bR_f^{k}\subseteq\bR_f^{k+1}$ for all $k$.
Similarly the outer approximations $(\D_f^k)$, $k\in\N$, also form a nested sequence since $\D_f^{k+1}\subseteq\D_f^{k}$ for all $k$.
}\end{rem}

\section{Extensions}

\subsection{Semi-algebraic functions}
Suppose for instance that given $q_1,q_2\in\R[\x,\y]$, one wants to
characterize the set 
\[\{\,\x\in\B: \: \min\,[q_1(\x,\y),q_2(\x,\y)]\,\leq\,0\quad\mbox{for all $\y\in\K_\x$}\,\},\]
where $\K_\x$ has been defined in (\ref{setKx}), i.e., the set $\bR_f$
associated with the semi-algebraic function $(\x,\y)\mapsto f(\x,\y)=\min[q_1(\x,\y),q_2(\x,\y)]$.
If $f$ would be the semi-algebraic function $\max[q_1(\x,\y),q_2(\x,\y)]$, characterizing $\bR_f$ would
reduce to the polynomial case (with some easy adjustments) as $\bR_f=\bR_{q_1}\cap\bR_{q_2}$. But for $f=\min[q_1,q_2]$
this characterization is not so easy, and in fact is significantly more complicated.
However, even though $f$ is not a polynomial any more, 
we shall next see that the above methodology also works for semi-algebraic functions, a much larger class than the class of polynomials. Of course there is no free lunch and the resulting computational burden increases because one needs additional lifting variables to represent the semi-algebraic function.\\

With $\bS\subset\R^n$ being semi-algebraic, recall that a continuous function
$f:\bS\to\R$ is a semi-algebraic function if its graph $\{(\x,f(\x)):\x\in\bS\}$ is a semi-algebraic set. And in fact, the graph of every semi-algebraic function is the projection of some {\it basic} semi-algebraic set
in a lifted space. For more details the interested reader is referred to e.g. Lasserre and Putinar \cite[p. 418]{lass-put}.\\

So with $\K\subset\R^n\times\R^m$ as in (\ref{basic-K}), let $f:\K\to\R$ be a semi-algebraic function whose graph 
$\Psi_f=\{\,(\x,\y,f(\x,\y)):\,(\x,\y)\in\K\,\}$ is the projection $\{(\x,\y,v)\in\R^n\times\R^m\times\R\}$ of a basic semi-algebraic set
$\widehat{\K}\subset\R^n\times\R^m\times\R^r$, i.e.:
\[v\,=\,f(\x,\y)\mbox{ and }(\x,\y)\,\in\,\K\,\Longleftrightarrow\:\exists\,\w\mbox{ s.t. }(\x,\y,v,\w)\,\in\,\widehat{\K}.\]
Letting $\widehat{f}:\widehat{\K}\to\R$ be such that $\widehat{f}(\x,\y,v,\w):=v$, we have
\[\bR_f\,=\,\{\,\x\in\B\,:\,f(\x,\y)\,\leq\,0\mbox{ for all $\y$ such that  $(\x,\y)\in\K$}\,\}\]
\[=\{\,\x\in\B\,:\,\widehat{f}(\x,\y,v,\w))\,\leq\,0\mbox{ for all $(\y,v,\w)$ such that  $(\x,\y,v,\w))\in\widehat{\K}$}\,\}.\]
Hence this is just a special case of has been considered in \S \ref{main} and therefore converging
inner approximations of $\bR_f$ can be obtained as in Theorem \ref{robust-approx} and Theorem \ref{th-sdp}.

\begin{ex}
For instance suppose that $f:\R^n\times\R^m\to\R$ is the semi-algebraic function
$(\x,\y)\mapsto f(\x,\y):=\min[q_1(\x,\y),q_2(\x,\y)]$. Then
using $a\wedge b=\frac{1}{2}(a+b-\vert a-b\vert)$ and
$\vert a-b\vert =\theta\geq0$ with $\theta^2=(a-b)^2$,
\begin{eqnarray*}
\widehat{\K}\,=\,\left\{(\x,\y,v,w)\right.&:& (\x,\y)\in\K;\:w^2=(q_1(\x,\y)-q_2(\x,\y))^2; \quad w\geq0;\\
&&\left.2v=q_1(\x,\y)+q_2(\x,\y)-w\right\},\end{eqnarray*}
and
\[\Psi_f\,=\,\{\,(\x,\y,f(\x,\y)):(\x,\y)\in\K\,\}\,=\,\{(\x,\y,v):\:(\x,\y,v,w)\,\in\,\widehat{\K}\}.\]
\end{ex}

\subsection{Convex inner approximations} It is worth mentioning that
enforcing convexity of inner approximations of $\bR_f$ is easy.
But of course there is some additional computational cost and the convergence in
Theorem \ref{robust-approx} is lost in general.

To enforce convexity of the level set $\{\x\in\B: p^*_k(\x)\leq0\}$ it suffices to require
that $p^*_k$ is convex on $\B$, i.e., adding the constraint
\[\langle\u,\nabla^2 p^*_k(\x)\,\u\rangle\,\geq\,0,\qquad\forall (\x,\u)\in \B\times\U,\]
where $\U:=\{\u\in\R^n:\Vert \u\Vert^2\leq 1\}$. The latter constraint can in turn be 
enforced by the Putinar positivity certificate
\begin{equation}
\label{aux5}
\langle\u,\nabla^2 p^*_k(\x)\,\u\rangle\,=\,\sum_{i=0}^{n}\omega_i(\x,\u)\,\theta_i(\x)
+\omega_{n+1}(\x,\u)\,\theta_{n+1}(\x,\u),\end{equation}
for some SOS polynomials $(\omega_i)\subset\Sigma[\x,\u]$ (and where
$\theta_{n+1}(\x,\u)=1-\Vert\u\Vert^2$).

Then (\ref{aux5}) can be included in the semidefinite program (\ref{sdp-k})
with $\omega_0\in\Sigma[\x,\u]_{k}$, and $\omega_i\in\Sigma[\x,\u]_{k-1}$, $i=1,\ldots\,n+1$.
However, now $\z=(z_{\alpha,\gamma,\beta})$, $(\alpha,\beta,\gamma)\in\N^{2n+m}$, and so solving
the resulting semidefinite program is more demanding.

\subsection{Polynomial matrix inequalities}

Let $\A_\alpha\in\s_m$, $\alpha\in\N^n_d$, be real symmetric matrices and let $\B\subset\R^n$ be a given box.
We first consider the set 
\begin{equation}
\label{pmi-S}
\bS:=\{\,\x\in\B:\,\A(\x)\succeq0\,\},\end{equation}
where $\A\in\R[\x]^{m\times m}$ is the matrix polynomial
\[\x\mapsto\:\A(\x)\,:=\,\sum_{\alpha\in\N^n_d}\x^\alpha\,\A_\alpha.\]
If $\A(\x)$ is linear in $\x$ then $\bS$ is convex and (\ref{pmi-S})
is an LMI description of $\bS$ which is very nice as it can be used efficiently in semidefinite programming.

In the general case the description (\ref{pmi-S}) of $\bS$ is called a Polynomial Matrix Inequality (PMI)
and cannot be used as efficiently as in the convex case. Indeed
$\bS$ is a basic semi-algebraic set with an alternative description in terms of
the box constraint $\x\in\B$ and $m$ additional polynomial inequality constraints
(including the constraint  ${\rm det}(\A(\x))\geq0$).
However, this latter description may not be very appropriate either because the degree
of polynomials involved in that description is potentially as large as $d^m$
which precludes from its use for practical computation (e.g., for optimization purposes). 

On the other hand,
for polynomial optimization problems with a PMI constraint $\A(\x)\succeq0$, one may still define
an appropriate and {\it ad hoc} hierarchy of semidefinite relaxations, as described in
Hol and Scherer \cite{hol1,hol2}, and Henrion and Lasserre \cite{ieee-pmi}. But even if more economical than the hierarchy using the former description
of $\bS$ with $m$ (high degree) polynomials, this latter approach may not still be ideal. In particular
it is not clear how to detect (and then take benefit of) some possible structured sparsity to reduce the computational cost.

So in the general case and when $d^m$ is not small,
one may be interested in a  description of $\bS$
simpler than the PMI (\ref{pmi-S}) so that it can 
used more efficiently.\\

Let $\Y:=\{\,\y\in\R^m: \Vert\y\Vert^2=1\,\}$ denote the unit sphere of $\R^m$.
Then with $(\x,\y)\mapsto f(\x,\y):=-\langle\y,\A(\x)\y\rangle$, the set $\bS$ has the alternative and equivalent description 
\begin{equation}
\label{pmi-S-1}
\bS\,=\,\{\,\x\in\B:\,f(\x,\y)\,\leq\,0,\:\forall\y\in\Y\,\}\,=:\,\bR_f,
\end{equation}
which involves the quantifier ``$\forall$". 
Therefore the machinery developed in \S \ref{main} can be applied to define 
the hierarchy of inner approximations $\bR_f^k\subset\bS$ in Theorem \ref{robust-approx}, where for each $k$,
$\bR^k_f=\{\x\in\B: p_k(\x)\leq0\}$ for some polynomial $p_k$ of degree $k$.
Observe that if $\x\mapsto \A(\x)$ is not a constant matrix, then with
\[\x\mapsto \overline{J_f}(\x)\,:=\,\sup_\y\,\{\,f(\x,\y):\,\y\in\Y\,\},\qquad\x\in\B,\]
the set $\{\x:\overline{J_f}(\x)=0\}$ has Lebesgue measure zero because
$\overline{J_f}(\x)$ is the largest eigenvalue of $-\A(\x)$.
Hence by Theorem \ref{robust-approx}
\[{\rm vol}\left(\bR_f^k\right)\,\to\,{\rm vol}(\bS),\qquad\mbox{ as $k\to\infty$}.\]
Notice that computing $p_k$ has required to introduce the $m$ additional variables $\y$ but
the degree of $f$ is not larger than $d+2$ if $d$ is the maximum degree of the entries.\\

Importantly for computational purposes, structure sparsity can be exploited to reduce the computational burden.
Write the polynomial 
\[(\x,\y)\mapsto f(\x,\y)\,=\,-\langle \y,\A(\x)\,\y\rangle\,=\,\sum_{\alpha\in\N^n}h_\alpha(\y)\,\x^\alpha,\quad (\x,\y)\in\R^n\times\R^m,\]
for finitely many quadratic polynomials $\{h_\alpha\in\R[\y]_2:\alpha\in\N^m\}$.
Suppose that the polynomial 
\[\x\mapsto \theta(\x)\,:=\,\sum_{\alpha\in\N^n}h_\alpha(\y)\,\x^\alpha,\qquad \x\in\R^n,\]
has some structured sparsity. That is, $\{1,\ldots,n\}=\cup_{\ell=1}^s I_\ell$ (with possible overlaps)
and $\theta(\x)=\sum_{\ell=1}^s\theta_\ell(\x_\ell)$ where $\x_\ell=\{x_i: i\in I_\ell\}$.
Then $(\x,\y)\mapsto f(\x,\y)$ inherits the same structured sparsity (but with now 
$\cup_{\ell=1}^sI'_\ell$ where $I'_\ell=\{x_i,y_1,\ldots y_m:i\in I_\ell\}$).  And so in particular, for computing
$p_k$ one may use the sparse version of the hierarchy of semidefinite relaxations 
introduced in  Waki et al. \cite{waki} which permits to handle problems with 
a significantly large number of variables.
\begin{ex}
\label{chap8-ex1}
{\rm The following illustrative example is taken from Henrion and Lasserre \cite{lass-ieee}.
With $n=2$, let $\B\subset\R^2$ be the unit disk $\{\x\,:\,\Vert\x\Vert^2\leq 1\}$, and let
\[\A(\x)\,:=\,\left[\begin{array}{cc} 1-16x_1x_2 & x_1 \\
x_1 & 1-x_1^2-x_2^2 \end{array}\right] ;\quad \bS:=\{\x\in\B\::\:\A(\x)\,\succeq\,0\}.\]

\begin{figure}[ht]
\begin{center}
\resizebox{0.9\textwidth}{!}
{\includegraphics{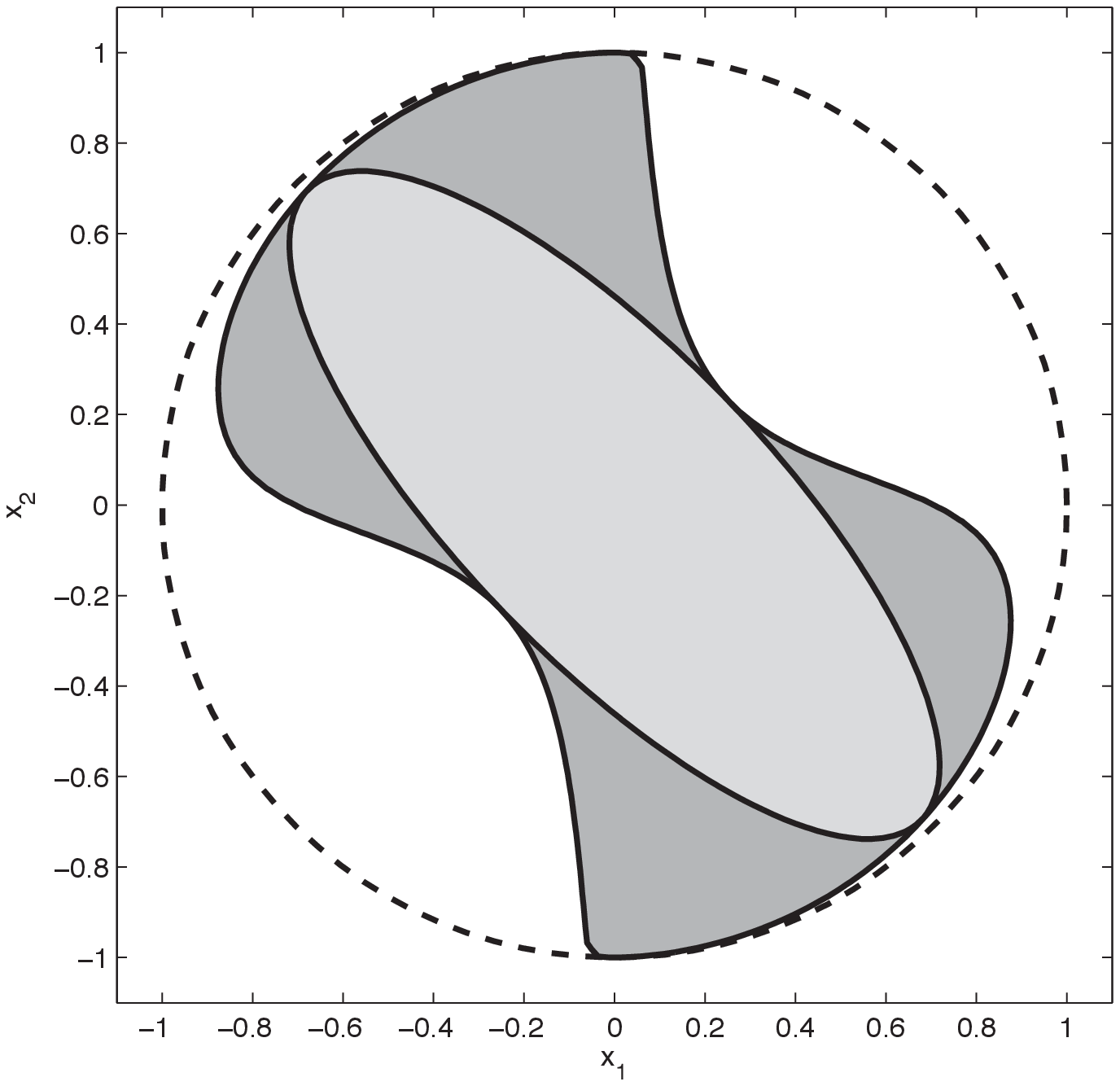}\includegraphics{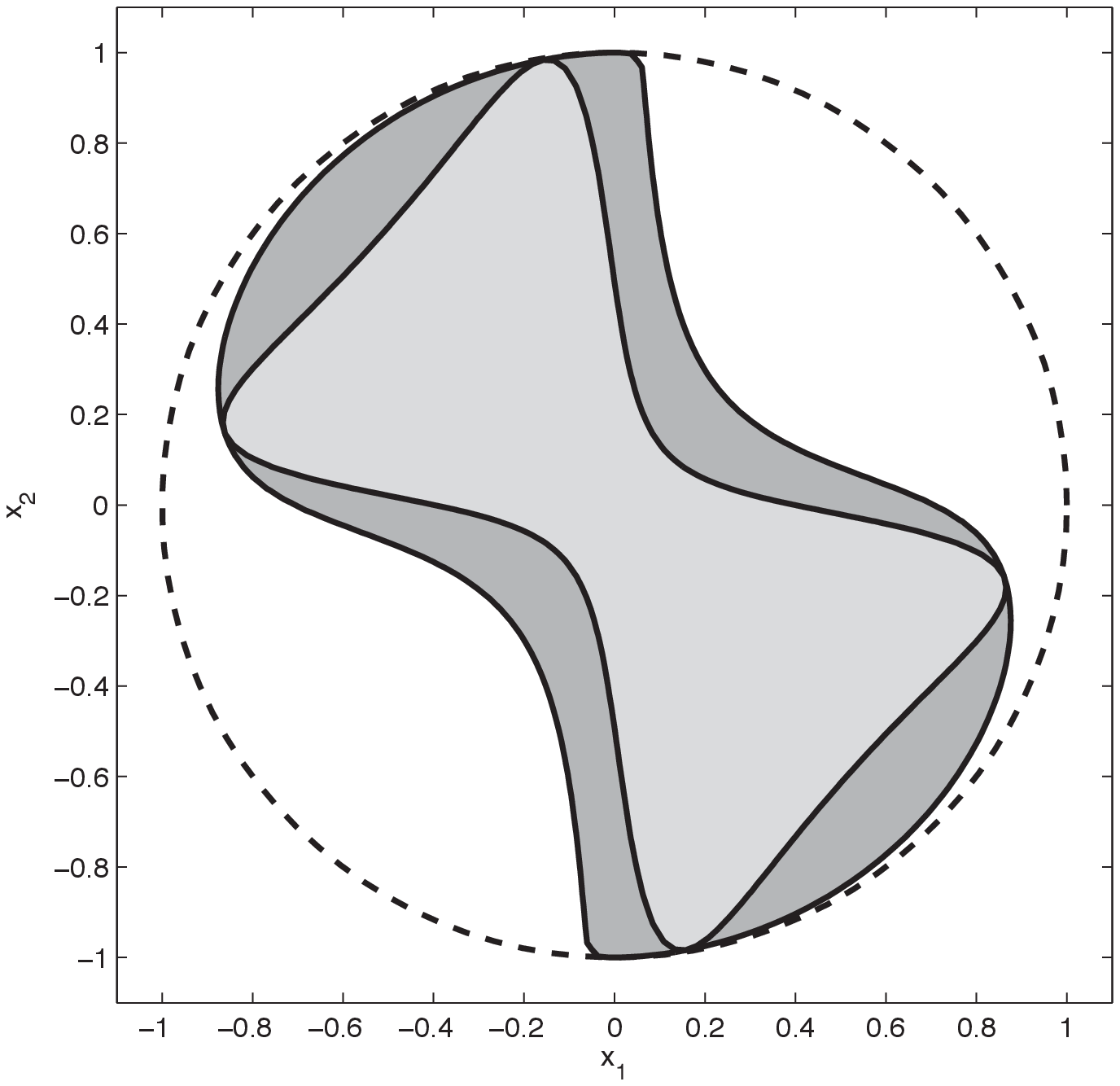}}
\caption{Example \ref{chap8-ex1}: $\bR_f^1$ (left) and $\bR_f^2$ (right) inner approximations (light gray)
of $\bS$ (dark gray) embedded in unit disk $\B$ (dashed)\label{figqmidisk24}}
\end{center}
\end{figure}

\begin{figure}[ht]
\begin{center}
\resizebox{0.9\textwidth}{!}
{\includegraphics{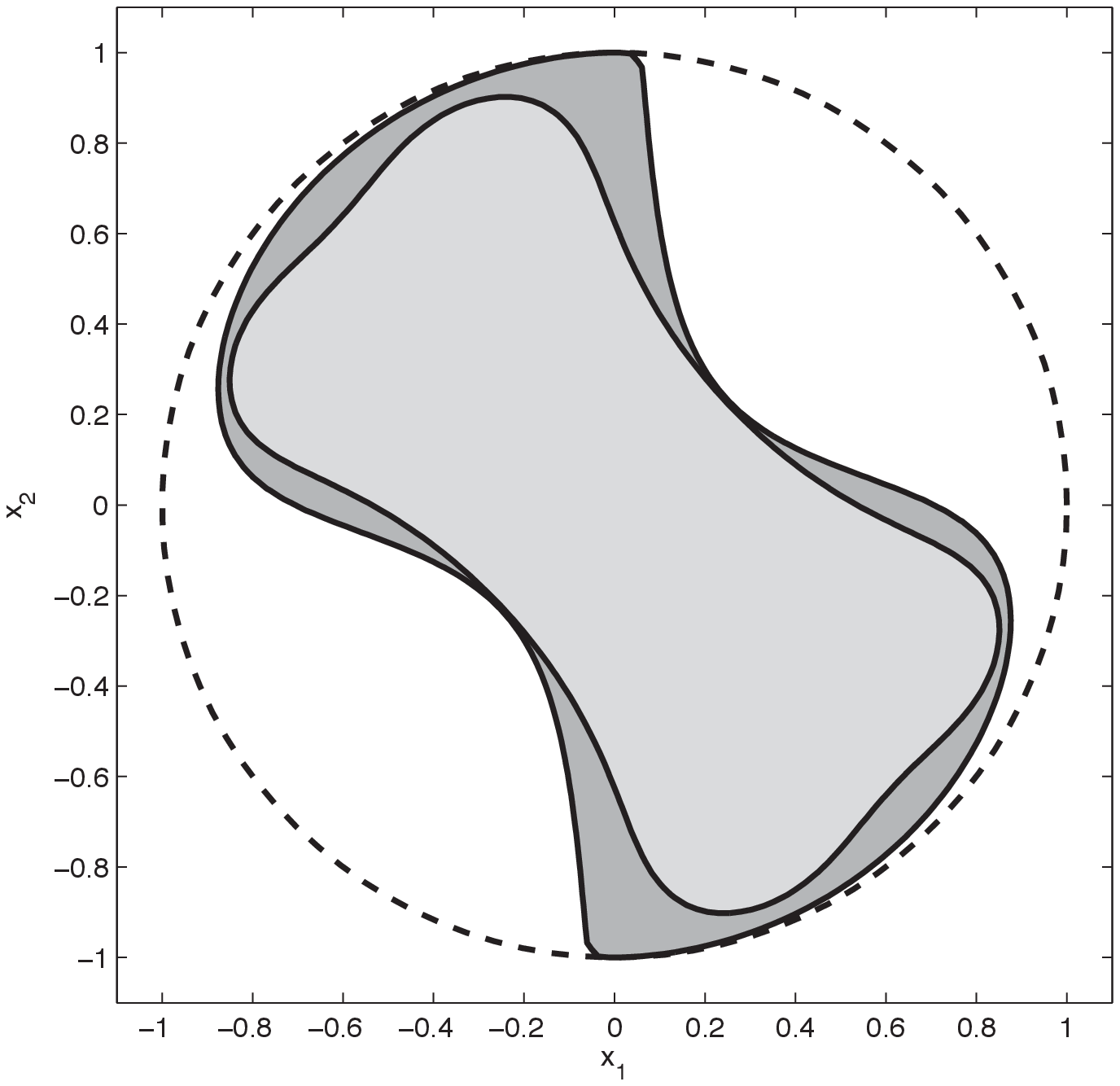}\includegraphics{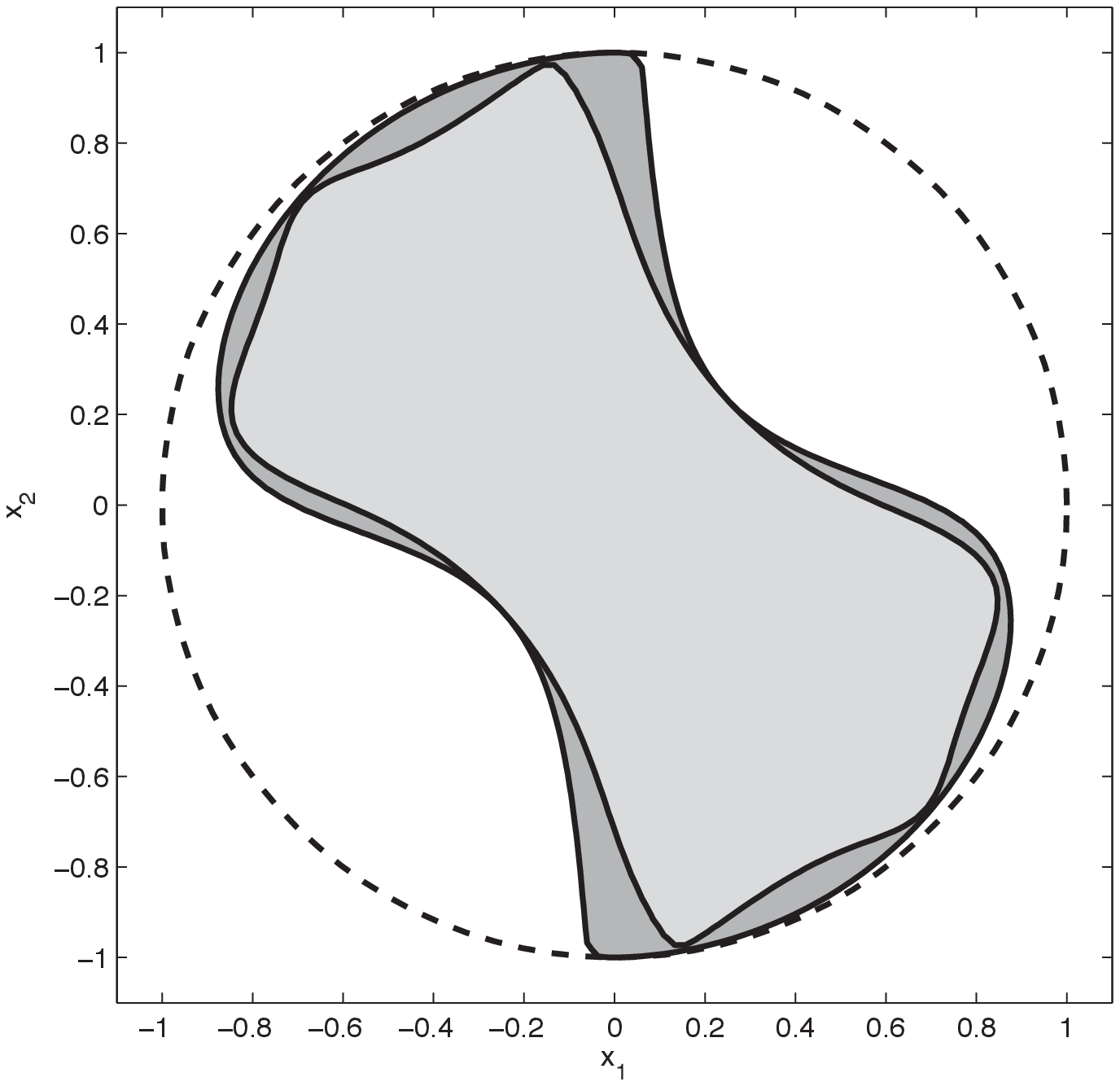}}
\caption{Example \ref{chap8-ex1}: $\bR_f^3$ (left) and $\bR_f^4$ (right) inner approximations (light gray)
of $\bS$ (dark gray) embedded in unit disk $\B$ (dashed).\label{figqmidisk68}}
\end{center}
\end{figure}
In Figure \ref{figqmidisk24} is displayed $\bS$
and the degree two $\bR_f^1$ and
four $\bR_f^2$ inner approximations of $\bS$, whereas in Figure \ref{figqmidisk68}
are displayed the $\bR_f^3$ and
$\bR_f^4$ inner approximations of $\bS$. One may see that with
$k=4$, $\bR_f^4$ is already a quite good approximation of $\bS$.
}\end{ex}
Next, with $\K$ as in (\ref{basic-K}) consider now sets of the form
\[\bR_f:=\{\x\in\B: \bF(\x,\y)\preceq0\mbox{ for all $\y$ in $\K_\x$}\,\},\]
where $\bF\in\R[\x,\y]^{m\times m}$ is a polynomial matrix in the $\x$ and $\y$ variables. Then 
letting $\Z:=\{\z\in\R^m:\Vert \z\Vert=1\}$, $\widehat{\K}:=\K\times\Z$, and $f:\widehat{\K}\to\R$, defined by:
\[(\x,\y,\z)\mapsto f(\x,\y,\z)\,:=\,\langle \z,\bF(\x,\y)\,\z\rangle,\qquad (\x,\y,\z)\in\widehat{\K},\] 
the set $\bR_f$ has the equivalent description:
\[\bR_f:=\{\x\in\B: f(\x,\y,\z)\leq 0\mbox{ for all $(\y,\z)$ in $\widehat{\K}_\x$}\,\},\]
and the methodology of \S \ref{main} again applies.

\subsection{Several functions $f$}

We next consider sets of the form
\[\bR_F:=\{\,\x\in\B:\, \mbox{$(\x,\y)\in \mathbf{F}$ for all $\y$ such that $(\x,\y)\in\K$}\,\}\]
where $\mathbf{F}\subset\R^n\times\R^m$ is a basic-semi algebraic set defined by
\[\mathbf{F}\,:=\,\{\,(\x,\y)\in\R^n\times\R^m:\,f_\ell(\x,\y)\,\leq\,0,\quad\forall \ell=1,\ldots,q\,\},\]
for some polynomials $(f_\ell)\subset\R[\x,\y]$, $\ell=1,\ldots,q$. In other words,
\[\bR_f\,=\,\{\,\x\in\B:\: \K_\x\,\subset\,\mathbf{F}_\x\,\},\]
where $\mathbf{F}_\x:=\{\,\y:\:(\x,\y)\,\in\,\mathbf{F}\,\}$.

Of course it is a particular case
of the previous section with the semi-algebraic function $f=\max[f_1,\ldots,f_q]$,
but in this case a simpler approach is possible. Let $p_{k\ell}\in\R[\x]$ be 
an optimal solution of (\ref{sdp-k}) associated with $f_\ell$,
$\ell=1,\ldots,q$, and let the set $\bR_F^k$ be defined by
\[\bR_F^k\,:=\,\{\,\x\in\R^n:p_{k\ell}(\x)\,\leq\,0,\quad \ell=1,\ldots,q\,\}\,=\,\bigcap_{\ell=1}^q\,\bR^k_{f_\ell},\]
where for each $\ell=1,\ldots,q$, the set $\bR_{f_\ell}^k$ is defined in the obvious manner.

The sets $(\bR_F^k)\subset\bR_F$, $k\in\N$, provide a sequence of inner approximations of $\bR_F$ with the nice property that
\[\lim_{k\to\infty}\,{\rm vol}\left(\bR_F^k\right)\,=\,{\rm vol}\left(\bR_F\right),\]
whenever the set $\{\,\x\in\B:\max_{\ell}\overline{J_{f_\ell}}(\x)\,=0\,\}$
has Lebesgue measure zero.

\subsection{Sets defined with two quantifiers}

Consider three types of variables $(\x,\y,\u)\in\R^n\times\R^m\times\R^s$,
a box $\B_\x\subset\R^n$, a box $\B_\y\subset\R^m$,
and a compact basic semi-algebraic set $\K\subset\B_\x\times\B_\y\times\U$.
It is assumed that for each $(\x,\y)\in\B_{\x\y}\,(=\B_\x\times\B_\y)$,
\[\K_{\x\y}\,:=\,\{\u\in\U\::\:(\x,\y,\u)\,\in\,\K\}\,\neq\,\emptyset.\]

\noindent
{\bf Sets with $\mathbf{\exists}$, $\mathbf{\forall}$.}
Consider a set $\D_f'$ of the form
\begin{equation}
\label{dprime}
\D_f'\,:=\,\{\x\in\B_\x\::\: \mbox{$\exists\,\y\in\B_\y$ such that $f(\x,\y,\u)\leq0$ for all $\u\in\K_{\x\y}$}\}.\end{equation}
Such a set is not easy to handle, in particular for optimizing over it.
So it is highly desirable to approximate as closely as possible the set $\D'_f$ with
a set having a much simpler description, and in particular a description with {\it no} quantifier. We propose to
use the methodology of \S \ref{main} to provide such approximations.
Consider the lift of $\D'_f$ given by:
\begin{eqnarray*}
\H_f&:=&\{\,(\x,\y)\in\B_{\x\y}:f(\x,\y)\,\leq\,0\mbox{ for all $\u\in\K_{\x\y}$}\,\}\\
&=&\{\,(\x,\y)\in\B_{\x\y}:\overline{J_f}(\x,\y)\leq 0\,\},
\end{eqnarray*}
where $\overline{J_f}(\x,\y):=\max_\u\,\{\,f(\x,\y,\u):(\x,\y,\u)\in\K\,\}$. Using results of \S \ref{main} we can find inner 
approximations $(\H^k_f)$ of the form:
\[\H^k_f\,:=\,\{\,(\x,\y)\in\B_{\x\y}:p_k(\x,\y)\leq 0\}\,\subset\,\H_f,\quad k\in\N,\]
for some polynomials $(p_k)\subset\R[\x,\y]$. This then gives inner approximations $(\D^k_f)$ of $\D'_f$ of the form:
\[\D^k_f\,=\,\{\,\x\in\B_{\x}:p_k(\x,\y)\,\leq\,0\mbox{ for some $\y\in\B_\y$}\,\}\,\subset\,\D'_f,\quad k\in\N.\]
Considering again results of \S \ref{main} we can find outer approximations
of these inner approximations, of the form:
\[\D^{k\ell}_f\,=\,\{\,\x\in\B_{\x}:p_{k\ell}(\x)\,\leq\,0\,\}\,\supset\,\D^{k}_f,\quad k,\ell\in\N,\]
for some polynomials $(p_{k\ell})\subset\R[\x]$. Unfortunately
obtaining (some type of) convergence 
$\D_f^{k\ell}\to\D'_f$ is much more difficult and requires additional hypotheses.
\vspace{0.2cm}

\noindent
{\bf Sets with $\forall$, $\exists$.} Consider now a set $\bR'_f$ of the form
\begin{equation}
\label{rprime}
\bR'_f\,:=\,\{\x\in\B_\x\::\: \mbox{$\forall\,\y\in\B_\y,\,\exists\,\u\in\K_{\x\y}$ such that $f(\x,\y,\u)\geq0$}\}.\end{equation}
As for $\D'_f$, such a set is not easy to handle, in particular for optimizing over it.
So again it is highly desirable to approximate as closely as possible the set $\bR'_f$ with
a set having a much simpler description, and in particular a description with {\it no} quantifier. So
proceeding in a similar fashion as before,
\[\bR'_f=\{\,\x\in\B_\x: \mbox{ $\overline{J_f}(\x,\y)\geq0$ for all $\y\in\B_\y$}\, \},\]
and from \S \ref{main} we can provide outer approximations $(\bR^k_f)$ of $\bR'_f$ of the form:
\[\bR'_f\subset\bR_f^k\,=\{\,\x\in\B_\x: \mbox{$p_k(\x,\y)\geq0$ for all $\y\in\B_\y$}\},\quad k\in\N,\]
for some polynomials $(p_k)\subset\R[\x,\y]$. 
Considering again results of \S \ref{main} we can find inner approximations
of these outer approximations, of the form:
\[\bR^{k\ell}_f\,=\,\{\,\x\in\B_{\x}:p_{k\ell}(\x)\,\geq\,0\,\}\,\subset\,\bR^k_f,\quad k,\ell\in\N,\]
for some polynomials $(p_{k\ell})\subset\R[\x]$. Unfortunately,  as for approximating $\D'_f$,
obtaining (some type of) convergence 
$\bR^{k\ell}_f\to\bR'_f$ is also much more difficult and requires additional hypotheses.

\subsection{General semi-algebraic set}

We finally briefly discuss the case where the set $\K\subset\R^{n+m}$ in (\ref{basic-K}) is semi-algebraic 
but not {\it basic} semi-algebraic. Of course $\K$ can be always represented as the projection of some basic semi-algebraic set 
$\widehat{\K}$ defined in a higher dimensional space $\R^{n+m+t}$ for some $t\in\N$. That is
\[\K\,=\,\{(\x,\y):\:\exists \,\z\mbox{ such that }(\x,\y,\z)\in\widehat{\K}\,\}.\]
If $\widehat{\K}$ is known (i.e. $\K$ is known implicitly from $\widehat{\K}$) then 
sets of the form $\D_f$ in (\ref{design-set}) can be approximated as we have done in \S \ref{main}.
Indeed,
\begin{eqnarray*}
\D_f&=&\{\:\x\in\B\::\:f(\x,\y)\geq 0 \mbox{ for some $\y\in\K_\x$}\:\}\\
&=&\{\:\x\in\B\::\:f(\x,\y)\geq 0 \mbox{ for some $(\y,\z)\in\widehat{\K}_\x$}\:\},
\end{eqnarray*}
where for every $\x\in\B$, $\widehat{\K}_\x:=\{\,(\y,\z):(\x,\y,\z)\in\widehat{\K}\,\}$. Similarly,
sets of the form $\bR_f$ in (\ref{robust-set}) can be also approximated as we have done in \S \ref{main} because
\begin{eqnarray*}
\bR_f&=&\{\:\x\in\B\::\:f(\x,\y)\leq 0 \mbox{ for all $\y\in\K_\x$}\:\}\\
&=&\{\:\x\in\B\::\:f(\x,\y)\leq 0 \mbox{ for all $(\y,\z)\in\widehat{\K}_\x$}\:\}.
\end{eqnarray*}
\noindent
{\bf Finite union of basic semi-algebraic sets.} Alternatively, a general semi-algebraic set $\K$ is often
described as the finite union $\cup_t\K_t$ (with possible overlaps) of basic compact semi-algebraic sets $\K_t$, $t\in T$, defined by:
\[\K_t\,=\,\{\,(\x,\y)\in\R^{n+m}:\:g_{tj}(\x,\y)\,\geq\,0,\quad j=1,\ldots,s_t\,\},\quad t\in T,\]
for some polynomials $(g_{tj})\subset\R[\x,\y]$, $j=1,\ldots,s_t$, $t\in T$.
Again the function 
\[\x\mapsto \overline{J_f}(\x)\,:=\,\max_\y\,\{\,f(\x,\y):(\x,\y)\,\in\,\K\,\},\quad \x\in\B,\]
is upper-semicontinuous on $\B$ and so Corollary \ref{over-under} applies. 

Therefore we can apply again the methodology of \S \ref{main} with {\it ad hoc} adjustments.
In particular, for practical computation of a sequence of polynomials $(p_k)\subset\R[\x]$ of increasing degree and with 
$p_k\geq\overline{J_f}$ for all $k$, and  $\int_\B\vert p_k-\overline{J_f}\vert\,d\lambda\to 0$ as $k\to\infty$, 
the analogue of the semidefinite program (\ref{sdp-k}) reads:
\begin{equation}
\label{sdp-union-k}
\begin{array}{rl}
\rho_k=\displaystyle\min_{p,\sigma_{tj},\psi_{ti}}&\displaystyle\int_{\B} p(\x)\,d\lambda(\x)\\
\mbox{s.t.}&p-f=\displaystyle\sum_{j=0}^{s_t+1}\sigma_{tj}\,g_{tj}+\sum_{i=1}^n \psi_{ti}\,\theta_i,\quad t\in T\\
&p\in\R[\x]_{2k};\:\sigma_{tj}\in\Sigma_{k-v_j}[\x,\y],\:j=0,\ldots,s_t+1;\,t\in T\\
&\psi_{ti}\in\Sigma_{k-1}[\x,\y],\:i=1,\ldots,n;\,t\in T.
\end{array}\end{equation}
If ${\rm int}\,(\K_t)\neq\emptyset$ for every $t\in T$ and $\K_\x:=\{\y:(\x,\y)\in\K\,\}\neq\emptyset$ for every $\x\in\B$,  then the analogue of Theorem \ref{th-sdp} is valid.
Namely, the semidefinite program (\ref{sdp-union-k}) has an optimal solution
$p^*_k\in\R[\x]_{2k}$ with the desired convergence property:
\[\lim_{k\to\infty}\:\int_\B \vert p^*_{k}(\x)-\overline{J_f}(\x)\vert\,d\x\,=\,0\quad\mbox{[Convergence in $L_1(\B)$]}.\]

\section{Conclusion}

We have seen how to approximate some semi-algebraic sets defined with quantifiers by a monotone sequence
of sublevel sets associated with appropriate polynomials of increasing degree. Each polynomial of the sequence is computed by solving
a semidefinite program whose size increases with the degree of the polynomial and
convergence of the approximations takes place in a strong sense.
Several extensions have also been provided. Of course,
solving the resulting hierarchy of semidefinite programs is computationally expensive 
and so far, in its present form the methodology is limited to problems of modest size only. Fortunately,
larger size problems with some structured sparsity pattern can be attacked 
by applying techniques already used in \cite{waki}. However,
evaluating (and improving) the efficiency of this methodology on a sample of problems of significant size
is a topic of further investigation.

\end{document}